\def\ps@pprintTitle{%
 \let\@oddhead\@empty
 \let\@evenhead\@empty
 \def\@oddfoot{}%
 \let\@evenfoot\@oddfoot}
\def\printFirstPageNotes{%
  \iflongmktitle
   \let\columnwidth=\textwidth\fi
  \ifx\@tnotes\@empty\else\@tnotes\fi
  \ifx\@nonumnotes\@empty\else\@nonumnotes\fi
  \ifx\@cornotes\@empty\else\@cornotes\fi
  \ifx\@elseads\@empty\relax\else
   \let\thefootnote\relax
   \footnotetext{\ifnum\theead=1\relax
      \textit{E-Mail:\space}\else
      \textit{E-Mail:\space}\fi
     \@elseads}\fi
  \ifx\@elsuads\@empty\relax\else
   \let\thefootnote\relax
   \footnotetext{\textit{URL:\space}%
     \@elsuads}\fi
  \ifx\@fnotes\@empty\else\@fnotes\fi
  \iflongmktitle\if@twocolumn
   \let\columnwidth=\Columnwidth\fi\fi
}
\newdefinition{definition}[equation]{Definition}
\newdefinition{defineCorrollary}[equation]{Definition/Conclusion}
\newtheorem{theorem}[equation]{Theorem}
\newtheorem{lemma}[equation]{Lemma}
\newdefinition{remark}[equation]{Remark}
\newdefinition{exmp}[equation]{Example}
\newtheorem{corollary}[equation]{Conclusion}
\newdefinition{algorithm}[equation]{Algorithm}
\newtheorem{assertion}[equation]{Assertion}
\newproof{proof}{Proof}
\newproof{prooftheorem}{Proof of Theorem \thetheorem}
\renewenvironment{proof}{{\bfseries Proof.}}{}
\renewcommand{\theequation}{\arabic{section}.\arabic{equation}}
\begin{document}

\begin{center}
{\Large{}Computing best discrete least-squares approximations by first-degree splines with free knots\rule{0pt}{20pt}}
\end{center}

\vspace{0.2cm}
\begin{center}\setlength{\footnotemargin}{0.5em}
{\large Ludwig J. Cromme\footnote{Numerische und Angewandte Mathematik, BTU Cottbus-Senftenberg, Platz der Deutschen Einheit 1, D-03046 Cottbus, Germany, E-mail: Ludwig.Cromme@b-tu.de\rule[-8pt]{0pt}{5pt}}, Jens Kunath\footnote{Numerische und Angewandte Mathematik, BTU Cottbus-Senftenberg, Platz der Deutschen Einheit 1, D-03046 Cottbus, Germany\rule[-8pt]{0pt}{5pt}} and Andreas Krebs\footnote{Aerodynamik und Str\"{o}mungslehre , BTU Cottbus-Senftenberg, Siemens-Halske-Ring 14, D-03046 Cottbus, Germany}}
\end{center}

%\begin{frontmatter}
%\title{Discrete approximation by first-degree splines with free knots}
%\author[cromme]{Ludwig~J.~Cromme\corref{cor1}}
%\ead{Ludwig.Cromme@b-tu.de}
%\author[cromme]{Jens~Kunath}

%\cortext[cor1]{Corresponding author}
%\address[cromme]{Numerische und Angewandte Mathematik, BTU Cottbus-Senftenberg, Platz der Deutschen Einheit 1, D-03046 Cottbus, Germany}

\vspace{0.2cm}
\centerline{\hrulefill}
\vspace{-0.1cm}
\begin{flushleft}
\textbf{Abstract}
\end{flushleft}

\vspace{-0.2cm}\noindent
We present an algorithm to compute best least-squares approximations of discrete real-valued functions by first-degree splines (broken lines) with free knots. We demonstrate that the algorithm delivers after a finite number of steps a (global) best approximation. The analysis is complemented by remarks on programming and by a number of numerical examples including applications from medicine (MBC, MIC).

\vspace{0.25cm}
\noindent\textit{Key words}: splines, first-degree splines, broken lines, splines with free knots, least-squares approximation, best approximation, minimal bactericidal concentration (MBC), minimal inhibitory concentration (MIC)

\centerline{\hrulefill}

%%%%%%%%%%%%%%%%%%%%%%%%%%%%%%
%%%%%%%%%%%%%%%%%%%%%%%%%%%%%%
% Introduction
%%%%%%%%%%%%%%%%%%%%%%%%%%%%%%
%%%%%%%%%%%%%%%%%%%%%%%%%%%%%%
\section{Introduction}
\label{sec:intro}
Splines of a real variable with free knots form a highly nonlinear approximation family with varying dimension of the boundary (Cromme \cite{cromme01}). Even just local best approximations are therefore not easily computed (Cromme \cite{cromme02}) and globally convergent numerical methods are not known.

But for the special case of first-degree splines (broken lines) with free knots a globally convergent numerical method for the approximation of discrete data in the least-squares sense could be developed and is presented in the following sections. 

\textit{The basic idea of the method is to reduce the complexity of the original problem: Instead of solving the original nonlinear problem on the whole data set a number of just linear problems on subsets ("segments") is treated and assembled to a solution of the original problem.}

The underlying idea is sketched in Fig. \ref{fig1}: In this figure the best $L_2$-approximation $s^{\ast}$ from the set of first-degree splines with $5$ free knots $t_1,\ldots,t_5$ to discrete data $(x_i,f_i)$, $i=0,1,\ldots,18$, is plotted. Knots can be located between two data abscissae (such knots will be called interior knots in the following, in our example $t_1$ und $t_4$ are interior knots) or coincide with data abscissae (called data knots, in our example $t_2=x_4, t_3 = x_{10}$, and $t_5 = x_{15}$ are data knots).

% ===================
% Figure 1
% ===================
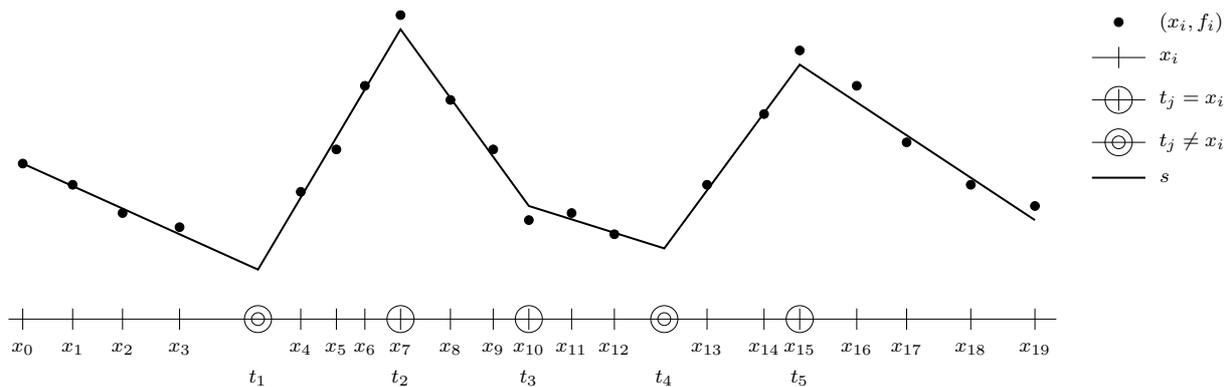
\begin{figure}[h]
\begin{center}
\resizebox{1.0\textwidth}{!}{
%\fbox{
\begin{tikzpicture}
\draw (-0.2,0)--(14.5,0);
\foreach \x/\y in {0/0,0.7/1,1.4/2,2.2/3,3.9/4,4.4/5,4.8/6,5.3/7,6/8,6.6/9,7.1/10,7.7/11,8.3/12,9.6/13,10.4/14,10.9/15,11.7/16,12.4/17,13.3/18,14.2/19} \draw (\x,0.15)--(\x,-0.15) node[below=2pt] {\footnotesize $x_{\y}$};
\draw (3.3,0) circle (0.09cm);
\draw (3.3,0) circle (0.19cm) node[below=0.6cm] {\footnotesize $t_1$};
\draw (5.3,0) circle (0.19cm) node[below=0.6cm] {\footnotesize $t_2$};
\draw (7.1,0) circle (0.19cm) node[below=0.6cm] {\footnotesize $t_3$};
\draw (9,0) circle (0.09cm);
\draw (9,0) circle (0.19cm) node[below=0.6cm] {\footnotesize $t_4$};
\draw (10.9,0) circle (0.19cm) node[below=0.6cm] {\footnotesize $t_5$};

% Polygonzug
\draw[thick,smooth] (0,2.2)--(3.3,0.7)--(5.3,4.1)--(7.1,1.6)--(9,1)--(10.9,3.6)--(14.2,1.4);

% Datenpunkte
\foreach \x/\y in {0/2.2,0.7/1.9,1.4/1.5,2.2/1.3,3.9/1.8,4.4/2.4,4.8/3.3,5.3/4.3,6/3.1,6.6/2.4,7.1/1.4,7.7/1.5,8.3/1.2,9.6/1.9,10.4/2.9,10.9/3.8,11.7/3.3,12.4/2.5,13.3/1.9,14.2/1.6} \draw[fill] (\x,\y) circle (1.75pt);

% Legende
%\draw (14.75,-1)--(14.75,4.5);
\draw[fill](15.375,4.2) circle (1.75pt);
\draw (15.75,4.2) node[right=2pt] {\footnotesize $(x_i,f_i)$};
\draw (15,3.7)--(15.75,3.7) node[right=2pt] {\footnotesize $x_i$};
\draw (15.375,3.85)--(15.375,3.55);
\draw (15,3.1)--(15.75,3.1) node[right=2pt] {\footnotesize $t_j = x_i$};
\draw (15.375,3.25)--(15.375,2.95);
\draw (15.375,3.1) circle (0.19cm);
\draw (15,2.5)--(15.75,2.5) node[right=2pt] {\footnotesize $t_j\neq x_i$};
\draw (15.375,2.5) circle (0.09cm);
\draw (15.375,2.5) circle (0.19cm);
\draw[thick] (15,2.0)--(15.75,2.0) node[right=2pt] {\footnotesize $s$};

\end{tikzpicture}
%} % END \fbox
} % END \resizebox
\end{center}
\caption{A first-degree spline (broken line) $s^\ast \in S^1_5[a,b]$ approximating the $\mu+2=20$ data $(x_0,f_0),\ldots,(x_{19},f_{19})$. $s^\ast$ has $k=5$ simple knots $t_1,\ldots,t_5$ and the boundary knots $t_0 := a := x_0$ and $t_6 := b := x_{19}$.}
\label{fig1}
\end{figure}

Basic is the observation that $s^{\ast}$ is not only best approximation on the whole data set, but also on subsets of the data called segments; for a formal introduction of notations see below. For example $\left.s^{\ast}\right|_{\left[x_4,x_{12}\right]}$ is a best approximation from $S^1\left(x_7,x_{10}\right)$, the first-degree splines with fixed knots $x_7$ and $x_{10}$, to the data $\left(x_4,f_4\right),\ldots,\left(x_{12},f_{12}\right)$. For if a better approximation $\tilde{s}$ with fixed knots $x_7$ and $x_{10}$ to the data $\left(x_4,f_4\right),\ldots,\left(x_{12},f_{12}\right)$ existed, $s^{\ast}$ could be modified in the considered segment in the direction of $\tilde{s}$. The interior knots $t_1$ and $t_4$ would be slightly moved in this process, the other knots would remain unchanged. The approximation error of the thus modified spline would be smaller than the approximation error of $s^{\ast}$ - contradicting the optimality of $s^{\ast}$ as best approximation.

So if these segments ranging from one interior knot to the next interior knot were known, we could determine $s^{\ast}$ by solving just \textit{\mbox{linear (!)}} approximation problems with fixed knots segment for segment. But since the segments of a best approximation are not known in advance our algorithm examines systematically possible segmentations of the data abscissae to see if they might lead to a best approximation.

The implementation of this idea raises a number of questions which will be adressed after formal presentation of the algorithm in the following section.

Let us summarize the most important notations used: Let $\mathbb{P}_m$ denote the real-valued polynomials of degree smaller or equal $m$ and $[a,b]$ a real interval $a < b$. For $a =: t_0 < t_1 < \ldots, < t_k < t_{k+1} := b$ and $m, k \in \mathbb{N}$ let 
\begin{equation*}
S^m(t_1,\ldots,t_k) := \left\{ \left. s \in C^{m-1}[a,b] \;\;\right|\;\; s_j|_{(t_j,t_{j+1})} \in \mathbb{P}_m \;,\; j = 0,1,\ldots,k \right\}
\end{equation*} 
be the set of splines of degree $m$ with $k$ fixed (simple) knots $t_1,\ldots,t_k$. The splines of degree $m \in \mathbb{N}$ with at most $k \in \mathbb{N}$ free (simple) knots are the set
\begin{eqnarray*}
S^m_k[a,b] &:=& \Big\{ s\in C^{m-1}[a,b] \;\; | \;\; \text{there are points} \\
&& a =: t_0 < t_1 < \ldots < t_k < t_{k+1} := b, \;\; \\ 
&& \text{such that} \;\; s|_{\left(t_i,t_{i+1}\right)} \in \mathbb{P}_m \; \text{for} \;\; i=0,1,\ldots,k  \Big\} \quad.
\end{eqnarray*} 
$t_i$ is called an \textit{improper} (or \textit{inactive}) knot of a spline $s\in S^m_k[a,b]$ if $s$ is  $m$-times differentiable in $t_i$, otherwise the knot is called \textit{proper} (or \textit{active}) knot.

Let $\mu+2$ real abscissae $a =: x_0 < x_1 < \ldots < x_{\mu+1} := b$ and function values $f_0,\ldots,f_{\mu+1} \in \mathbb{R}$ of a function $f:[a,b] \rightarrow \mathbb{R}$ be given. We define vectors $X := \left(x_0,\ldots,x_{\mu+1}\right)^t$, \mbox{$F := \left(f_0,\ldots,f_{\mu+1}\right)^t$}, and for $g: \left\{x_0,\ldots,x_{\mu+1}\right\} \rightarrow \mathbb{R}$ the vector $g(X) := \left(g(x_0),\ldots,g(x_{\mu+1})\right)^t$. With the Euclidian norm $\left\|\cdot\right\|_2$ our problem can now be stated:
\begin{definition}\label{eq:approxprobelmdef}
(\textbf{Approximation problem}) Find $s^{\ast} \in S^1_k[a,b]$ with 
\begin{equation*}
\left\|F-s^{\ast}\right\|_{2,X} := \left\|F-s^{\ast}(X)\right\|_2 = \inf_{s \in S^1_k[a,b]} \left\|F-s(X)\right\|_2 \quad.
\end{equation*}
\end{definition}
In the following sections we generally assume $k \geq 1$ and $\mu \geq k+1 \geq 2$, since otherwise the problem would be a purely polynomial approximation problem or the data could be reproduced exactly in $S_k^1[a,b]$.

%%%%%%%%%%%%%%%%%%%%%%%%%%%%%%%%%%%%%%%%%%
%%%%%%%%%%%%%%%%%%%%%%%%%%%%%%%%%%%%%%%%%%
% Section 2 - to Definition/Conclusion 2.3
%%%%%%%%%%%%%%%%%%%%%%%%%%%%%%%%%%%%%%%%%%
%%%%%%%%%%%%%%%%%%%%%%%%%%%%%%%%%%%%%%%%%%
\vspace{0.2cm}
\setcounter{equation}{0}
\section{The basic algorithm and convergence theorem}
\label{sec:mainpart}
A spline $s \in S^1_k[a,b]$ partitions the data abscissae $x_0 < x_1 < \ldots < x_{\mu+1}$ naturally into segments each ranging from an interior knot (i.e. a knot situated between two data abscissae) to the next interior knot. Within a segment there can be more data knots, i.e. knots coinciding with data abscissae. 

Such segmentations are basic for the numerical procedure presented below. In defining the segments it is advantageous to code the position of knots in a "position vector", i.e. the information wether a knot coincides with an abscissa and with which or between which abscissae a knot is located. To start we define position vectors for arbitrary first-degree splines. As will be seen later, the algorithm needs to deal only with "regular" position vectors, see Defintion \ref{eq:defRegularPositionsvektor}.

\begin{definition}\label{eq:defPositionsvektor}
\begin{itemize}
	\item[(a)] A position vector $p$ of length $k$ to the data $x_0,\ldots,x_{\mu+1}$ is a vector from $\mathbb{N}_0^k$ with
\begin{equation*}
0 \;\leq\; p(j) \;\leq\; p(j+1) \;\leq\; 2\mu\;,\;\; j=1,2,\ldots,k-1\;,
\end{equation*}
\begin{equation*}
p(j) \;=\; p(j+1) \quad\Rightarrow\quad \text{$p(j)$ is even},
\end{equation*}
where $\mathbb{N}_0 := \mathbb{N}\cup\{0\}$ and $0$ is considered even.
	\item[(b)] For $s \in S^1_k[a,b]$, $a =: x_0 < x_1 < \ldots < x_{\mu+1} := b$ and $x_0 < t_1 < t_2 < \ldots < t_k < x_{\mu+1}$ the position vector $p \in \mathbb{N}_0^k$ of $s$ to the data $x_0,\ldots,x_{\mu+1}$ is defined as
\begin{equation*}
p(j) \;:=\; \left\{
\begin{array}{lcl}
2i-1 & , & \text{if} \;\; t_j = x_i \\
2i & , & \text{if} \;\; t_j \in \left(x_i,x_{i+1}\right) \quad.
\end{array}
\right.
\end{equation*}
\end{itemize}
\end{definition}

\vspace {0.1cm}
\begin{corollary}
\begin{itemize}
	\item[(a)] The position vector for $s^{\ast}$ from Fig. \ref{fig1} is the vector $p = (6, 13, 19, 24, 29)$.
	\item[(b)] A position vector of $s$ allows us to infer back the position of the knots of $s$: $t_j = x_{\frac{p(j)+1}{2}}$, if $p(j)$ is odd and $t_j\in\left(x_{\frac{p(j)}{2}},x_{\frac{p(j)}{2}+1}\right)$, if $p(j)$ is even. By Definition \ref{eq:defPositionsvektor}{} (b) the position of knots is mapped one-to-one onto the set of position vectors.  
	\item[(c)] Successive components $p(j)$ and $p(j+1)$ of a position vector $p$ for $s\in S_k^1[a,b]$ are equal iff $t_j$ and $t_{j+1}$ lie in the same interval $\big(x_i,x_{i+1}\big)$:
\begin{equation*}
p(j) \;=\; p(j+1) \quad\Leftrightarrow\quad x_{\frac{p(j)}{2}} < t_j < t_{j+1} < x_{\frac{p(j)}{2}+1} \quad.
\end{equation*}
  \item[(d)] For every position vector $p$ according to Definition \ref{eq:defPositionsvektor} (a) exists a spline $s\in S_k^1[a,b]$ with $p$ as position vector. To be more precice, there exists a whole class of splines all of which have $p$ as corresponding position vector.
\end{itemize}
\end{corollary}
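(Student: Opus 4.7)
The plan is to handle the four parts in order, since they build on the definition of the position vector in a largely mechanical way; the only step requiring more than bookkeeping is part (d), where one must actually produce the knot sequence.

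For part (a), I would simply read off the figure. For each of the five knots I check whether $t_j$ coincides with some $x_i$ (in which case Definition \ref{eq:defPositionsvektor}(b) gives $p(j)=2i-1$) or sits strictly between consecutive abscissae $x_i$ and $x_{i+1}$ (giving $p(j)=2i$). In the figure $t_1$ is interior between $x_3$ and $x_4$, $t_2=x_7$, $t_3=x_{10}$, $t_4$ is interior between $x_{12}$ and $x_{13}$, and $t_5=x_{15}$, yielding $p=(6,13,19,24,29)$.

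For part (b), the stated formulas are just the inversion of the two cases of Definition \ref{eq:defPositionsvektor}(b): solving $p(j)=2i-1$ for $i$ gives $t_j=x_{(p(j)+1)/2}$, and $p(j)=2i$ gives $t_j\in(x_{p(j)/2},x_{p(j)/2+1})$. The parity of $p(j)$ unambiguously selects the case, hence the map is well-defined in both directions, giving the one-to-one correspondence between admissible knot configurations (understood, for interior knots, as a choice of enclosing open interval) and position vectors. Part (c) follows from (b): if $p(j)=p(j+1)=:q$, Definition \ref{eq:defPositionsvektor}(a) forces $q$ to be even, so both knots lie in the same open interval $(x_{q/2},x_{q/2+1})$; combined with the ordering $t_j<t_{j+1}$ from $s\in S_k^1[a,b]$ one obtains the displayed chain, and the converse is immediate from the inversion formulas.

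The substantive step is part (d): given a position vector $p$ satisfying Definition \ref{eq:defPositionsvektor}(a), construct knots $t_1<\ldots<t_k$ realising $p$. My approach is to define $t_j$ componentwise via the formulas of (b) (for even $p(j)$, pick any point in the corresponding open interval) and then verify the strict ordering $t_j<t_{j+1}$ by a short case distinction on the parities of $p(j)$ and $p(j+1)$. The three mixed/strict-inequality cases follow directly from $p(j)<p(j+1)$ together with the inclusions $x_i<t_j<x_{i+1}$; the only delicate situation is $p(j)=p(j+1)$, allowed by Definition \ref{eq:defPositionsvektor}(a) only when both entries are even, where the two knots must be placed in the same interval $(x_i,x_{i+1})$—this is where the proof must use the freedom of interior knots and simply choose two distinct points in that interval. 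Once the knots are fixed, any continuous piecewise-linear function with breakpoints at $t_1,\ldots,t_k$ lies in $S_k^1[a,b]$ and realises $p$; since the values of such a function at the knots can be prescribed freely, this actually furnishes an infinite family, yielding the "whole class" assertion. I expect the $p(j)=p(j+1)$ case in (d) to be the only place where one really needs to invoke the structural restriction imposed by Definition \ref{eq:defPositionsvektor}(a), all other steps being direct translations between two equivalent pieces of notation.
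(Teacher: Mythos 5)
Your proposal is correct and, for part (d) — the only part the paper actually proves — it follows essentially the same route: place data knots via the inversion formula for odd components, place blocks of equal even components as distinct points inside the corresponding open interval $\bigl(x_{p(j)/2},x_{p(j)/2+1}\bigr)$, and check the strict ordering $t_1<\ldots<t_k$. Your explicit verifications of (a)--(c) are likewise sound (the paper leaves these to the reader), and your reading of the ``one-to-one'' claim in (b) as a bijection between position vectors and knot configurations up to the choice of enclosing interval for interior knots is the intended one.
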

\begin{proof}
We confine ourselves to the proof of (d). Let $p$ be a position vector according to Definition \ref{eq:defPositionsvektor} (a). For odd $p(j)$ set $t_j := x_{\frac{p(j)+1}{2}}$. For even $p(j)$ without less of generality let $p(j)$ to $p(j+q)$ be exactly the components of $p$ identical with $p(j)$, that is, $p(j) = p(j+1) = \ldots = p(j+q)$ and $p(j-1) < p(j)$, if $j \geq 2$ and $p(j+q) < p(j+q+1)$, if $j+q < k$. Choose $t_j,\ldots,t_{j+q}$ such that
\begin{equation*}
x_{\frac{p(j)}{2}} \;<\; t_j \;<\; t_{j+1} \;<\; \ldots \;<\; t_{j+q} \;<\; x_{\frac{p(j)}{2}+1}
\end{equation*}
holds. This implies $t_1 < t_2 < \ldots < t_k$ and each spline from $S_k^1[a,b]$ with these knots has position vector $p$. \hfill $\square$
\end{proof}

\vspace{0.2cm}
\noindent{}The position of the knots (coded by the position vector) induces a segmentation of the data. Each segment ranges from one interior knot to the next, more precisely:
\begin{defineCorrollary}\label{definitonSegmente}
Let $p\in\mathbb{N}_0^k$ be a position vector for $s\in S^1_k[a,b]$ according to Definition \ref{eq:defPositionsvektor}:
\begin{itemize}
	\item \textbf{Case 1:} All $p(j)$ are odd ($j=1,2,\ldots,k$). Then $s$ has no interior knots, all knots coalesce with date abscissae and we have
\begin{equation*}
t_j := x_{\frac{p(j)+1}{2}} \quad \text{for $j=1,2,\ldots,k$} \quad.
\end{equation*}	
There is only one segment $S_0 := \left\{ x_0, x_1, \ldots, x_{\mu+1} \right\}$. 
	
	\item \textbf{Case 2:} At least one component $p(j)$ of $p$ is even. Let $r\in\{1,\ldots,k\}$ be chosen maximal such that $p(j_l)$ is even for $l=1,\ldots,r$ and suitable $j_l$ with $1 \leq j_1 < j_2 < \ldots < j_r \leq k$. The knots $t_{j_1},\ldots,t_{j_r}$ are exactly the interior knots of $s$. Set
\begin{equation*}
i_l \;:=\; \frac{p\left(j_l\right)}{2} \;\;\;,\;\;\; l = 1,2,\ldots,r \quad.
\end{equation*} 
Let $r \geq 2$, $l \in \{1,\ldots,r-1\}$ hold. In case $i_l = i_{l+1}$, set $S_l := \emptyset$. In case $i_l+1 \leq i_{l+1}$ set 
\begin{equation*}
S_l \;:=\; \left\{x_{i_l+1}, x_{i_l+2},\ldots,x_{i_{l+1}}\right\} \quad.
\end{equation*}
If $i_l+1 \leq i_{l+1}$ and in addition $q_l := j_{l+1} - j_l - 1 > 0$ holds then
\begin{equation*}
t_{j_l + i} \;=\; x_{\frac{p\left(j_l+i\right)+1}{2}} \;\;\;,\;\;\; i = 1,2,\ldots,q_l,
\end{equation*}
are the knots in segment $S_l$ coalescing with data abscissae. So the segments $S_0$ (to the left of first interior knot $t_{j_1}$) and $S_r$ (to the right of the last interior knot $t_{j_r}$) are left to be defined here (and also for $r=1$). $S_0$ consists of abscissae $x_0,\ldots,x_{i_1}$ and contains - if $j_1 - 1 > 0$ - the knots 
\begin{equation*}
t_i = x_{\frac{p(i)+1}{2}} \;\;\;,\;\;\; i = 1,2,\ldots,j_1-1 \quad. 
\end{equation*}
Segment $S_r$ consists of abscissae $x_{i_r+1},\ldots,x_{\mu+1}$ and contains - if $j_r+1 \leq k$ - the knots
\begin{equation*}
t_i = x_{\frac{p(i)+1}{2}} \;\;\;,\;\;\; i = j_r+1,\ldots,k \quad. 
\end{equation*}
\end{itemize}
\end{defineCorrollary}

% =============================
% Figure 2
% =============================
\begin{figure}[h]
\begin{center}
\resizebox{1.0\textwidth}{!}{
%\fbox{
\begin{tikzpicture}

\draw[fill,lightgray] (3.75,-3)--(3.75,2.3)--(12,2.3)--(12,-3)--(3.75,-3);
\draw[darkgray] (7.75,-2.7) node {\large S e g m e n t \; $S_l$};
\draw[<-,>=stealth,darkgray] (3.75,-2.7)--(6,-2.7);
\draw[->,>=stealth,darkgray] (9.5,-2.7)--(12,-2.7);

\draw (-0.3,0)--(1.7,0);

\draw (0,0.15)--(0,-0.15);
\draw (0,-0.5) node {$x_0$};

\draw[<-,>=stealth] (0.375,0.4)--(0.375,0.8) node[above=2pt] {\rotatebox{45}{$0$}};
\draw[<-,>=stealth] (0.75,0.4)--(0.75,0.8) node[above=2pt] {\rotatebox{45}{$1$}};

\draw (0.75,0.15)--(0.75,-0.15);
\draw (0.75,-0.5) node {$x_1$};

\draw[<-,>=stealth] (1.125,0.4)--(1.125,0.8) node[above=2pt] {\rotatebox{45}{$2$}};
\draw[<-,>=stealth] (1.5,0.4)--(1.5,0.8) node[above=2pt] {\rotatebox{45}{$3$}};

\draw (1.5,0.15)--(1.5,-0.15);
\draw (1.5,-0.5) node {$x_2$};

\draw[dashed] (1.7,0)--(2.8,0);
\draw (2.8,0)--(4.7,0);

\draw (3,0.15)--(3,-0.15);
\draw (3,-0.5) node {$x_{i_l}$};

\draw[<-,>=stealth] (3,0.4)--(3,0.8);
\draw (3.2,0.8) node[above=2pt] {\rotatebox{45}{$2i_l-1$}};

\draw[<-,>=stealth] (3.75,0.4)--(3.75,0.8) node[above=2pt] {\rotatebox{45}{$2i_l$}};
\draw (3.75,0) circle (0.15cm);
\draw[<-,>=stealth] (3.75,-1)--(3.75,-1.4) node[below=2pt] {$t_{j_l}$};

\draw[<-,>=stealth] (4.5,0.4)--(4.5,0.8);
\draw (4.7,0.8) node[above=2pt] {\rotatebox{45}{$2i_l+1$}};
\draw (4.5,0.15)--(4.5,-0.15);
\draw (4.5,-0.5) node {$x_{i_l+1}$};

\draw[dashed] (4.7,0)--(6.0,0);
\draw (6.0,0)--(7.3,0);

\draw[<-,>=stealth] (6.5,0.4)--(6.5,0.8);
\draw (6.9,0.8) node[above=2pt] {\rotatebox{45}{$p\left(j_l+1\right)$}};
\draw (6.5,0.15)--(6.5,-0.15);
\draw (6.5,0) circle (0.15cm);
\draw (6.5,-0.5) node {$x_{\frac{p\left(j_l+1\right)+1}{2}}$};
\draw[<-,>=stealth] (6.5,-1)--(6.5,-1.4) node[below=2pt] {$t_{j_l+1}$};

\draw[dashed] (7.0,0)--(8.3,0);
\draw (8.3,0)--(9.3,0);

\draw[<-,>=stealth] (8.8,0.4)--(8.8,0.8);
\draw (9.2,0.8) node[above=2pt] {\rotatebox{45}{$p\left(j_l+q_l\right)$}};
\draw (8.8,0.15)--(8.8,-0.15);
\draw (8.8,0) circle (0.15cm);
\draw (8.8,-0.5) node {$x_{\frac{p\left(j_l+q_l\right)+1}{2}}$};
\draw[<-,>=stealth] (8.8,-1)--(8.8,-1.4) node[below=2pt] {$t_{j_l+q_l}$};

\draw[dashed] (9.3,0)--(10.7,0);
\draw (10.6,0)--(13.3,0);

\draw[<-,>=stealth] (11,0.4)--(11,0.8);
\draw (11.4,0.8) node[above=2pt] {\rotatebox{45}{$2i_{l+1}-1$}};
\draw (11,0.15)--(11,-0.15);
\draw (11,-0.5) node {$x_{i_{l+1}}$};

\draw[<-,>=stealth] (12,0.4)--(12,0.8);
\draw (12.2,0.8) node[above=2pt] {\rotatebox{45}{$2i_{l+1}$}};
\draw (12,0) circle (0.15cm);
\draw[<-,>=stealth] (12,-1)--(12,-1.4) node[below=2pt] {$t_{j_{l+1}}$};

\draw[<-,>=stealth] (13,0.4)--(13,0.8);
\draw (13.4,0.8) node[above=2pt] {\rotatebox{45}{$2i_{l+1}+1$}};
\draw (13,0.15)--(13,-0.15);
\draw (13,-0.5) node {$x_{i_{l+1}+1}$};

\draw[dashed] (13.3,0)--(14.4,0);
\draw (14.4,0)--(17.3,0);

\draw[<-,>=stealth] (14.6,0.4)--(14.6,0.8);
\draw (14.8,0.8) node[above=2pt] {\rotatebox{45}{$2\mu-3$}};
\draw (14.6,0.15)--(14.6,-0.15);
\draw (14.6,-0.5) node {$x_{\mu-1}$};

\draw[<-,>=stealth] (15.3,0.4)--(15.3,0.8);
\draw (15.5,0.8) node[above=2pt] {\rotatebox{45}{$2\mu-2$}};
\draw[<-,>=stealth] (16,0.4)--(16,0.8);
\draw (16.2,0.8) node[above=2pt] {\rotatebox{45}{$2\mu-1$}};
\draw (16,0.15)--(16,-0.15);
\draw (16,-0.5) node {$x_{\mu}$};
\draw[<-,>=stealth] (16.5,0.4)--(16.5,0.8);
\draw (16.6,0.8) node[above=2pt] {\rotatebox{45}{$2\mu$}};

\draw (17,0.15)--(17,-0.15);
\draw (17,-0.5) node {$x_{\mu+1}$};

\end{tikzpicture}
%} % END \fbox
} % END \resizebox
\end{center}
\caption{Diagram of segment $S_l$: Numbers $0,1,2,\ldots,2\mu$ as pointer for the position of knots and the segment $S_l = \big\{x_{i_l+1},\ldots,x_{i_{l+1}}\big\}$ ranging from interior knot $t_{j_l}$ to interior knot $t_{j_{l+1}}$.}
\label{fig2}
\end{figure}
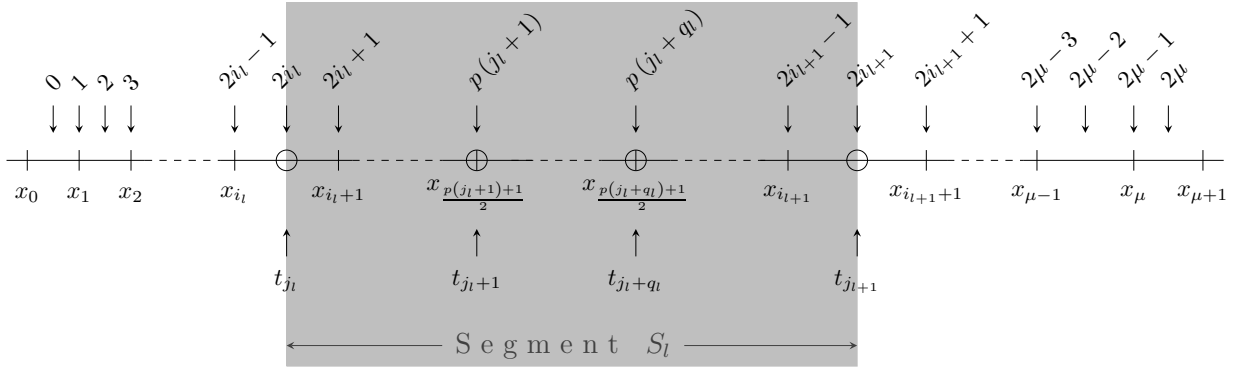

%%%%%%%%%%%%%%%%%%%%%%%%%%%%%%%%%%%%%%%%%%%
%%%%%%%%%%%%%%%%%%%%%%%%%%%%%%%%%%%%%%%%%%%
% Section 2 - Example 2.4 to Algorithm 2.10
%%%%%%%%%%%%%%%%%%%%%%%%%%%%%%%%%%%%%%%%%%%
%%%%%%%%%%%%%%%%%%%%%%%%%%%%%%%%%%%%%%%%%%%

\vspace{0.1cm}
\begin{exmp}
Position vector $p = \left(6,1,3,19,24,29\right)$ for spline $s^{\ast}$ in Fig. \ref{fig1} leads to segments \mbox{$S_0 = \left\{x_0,x_1,x_2,x_3\right\}$} without knots, $S_1 = \left\{x_4,x_5,\ldots,x_{12}\right\}$ with knots $t_2 := x_7$ and $t_3 := x_{10}$ and to segment $S_2 = \left\{x_{13},x_{14},\ldots,x_{18}\right\}$ with the knot $t_5 := x_{15}$. 
\end{exmp}

\vspace{0.1cm}
\begin{remark}\label{eq:bemerkungMindesanforderungen}
Because of Theorem 12 in \cite{cromme03} we can limit our search for a best approximation to splines with only simple knots and for which the additional features (a) to (d) from Theorem 12 in \cite{cromme03} hold; see also Appendix A. In Lemma \ref{eq:LemmaAequivalenz} we will prove that these are exactly the splines with regular position vectors.
\end{remark}

\begin{definition}\label{eq:defRegularPositionsvektor}
A position vector $p \in \mathbb{N}_0^k$ is called \textit{regular position vector of length $k$}, if the following features hold:
\begin{itemize}
  \item[(a)] $\left\{p(1),\ldots,p(k)\right\} \subseteq \left\{1,2,\ldots,2\mu-1\right\}$ \quad and
	
	\item[(b)] $\text{$p(j)$ even} \quad\Rightarrow\quad \forall i\in\{1,\ldots,j-1\}: \; p(i) \neq p(j) - 1,\; \forall i\in\{j+1,\ldots,k\}: \; p(i) \neq p(j)+1$\;.
	
  \item[(c)] For $p(j)$ and $p(j+l+1)$ ($l\geq 0$) both even and for $l \geq 1$ in addition $p(j+1),\ldots,p(j+l)$ odd the following inequality holds 
\begin{equation*}
\frac{p(j+l+1)}{2} - \frac{p(j)}{2} \;\geq\; 2 + l \quad.
\end{equation*}
	
	\item[(d)] $\forall\, j\in\{1,\ldots,k-1\}: \;\; p(j+1)-p(j) \geq \left\{
\begin{array}{lcl}
4 &,& \text{for $p(j)$, $p(j+1)$ both even} \\
2 &,& \text{otherwise}
\end{array}
\right.$	
\end{itemize}
\end{definition}

\vspace{0.2cm}
\begin{lemma}\label{eq:LemmaAequivalenz}
The position vector of a spline $s\in S^k[a,b]$ is regular iff $s$ exhibits features (a) thru (d) from Theorem 12 in \cite{cromme03}.
\end{lemma}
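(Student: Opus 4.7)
The plan is to establish the equivalence by direct translation, using the dictionary between position vectors and knot positions recorded in Definition \ref{eq:defPositionsvektor}(b) and Corollary 2.3(b): $p(j) = 2i-1$ encodes $t_j = x_i$ (a data knot), while $p(j) = 2i$ encodes $t_j \in (x_i, x_{i+1})$ (an interior knot). Since Definition \ref{eq:defRegularPositionsvektor} is designed precisely to recast the four geometric features of Theorem 12 in \cite{cromme03} as combinatorial constraints on the integer-valued $p$, the proof reduces to matching the conditions one at a time, in both directions.

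I would begin by rewriting each of (a)--(d) of Definition \ref{eq:defRegularPositionsvektor} in terms of the knots $t_1,\ldots,t_k$ themselves. Condition (a), $\{p(1),\ldots,p(k)\} \subseteq \{1,\ldots,2\mu-1\}$, forbids the values $0$ and $2\mu$ and thus says that no knot of $s$ lies in the open boundary intervals $(x_0,x_1)$ or $(x_\mu,x_{\mu+1})$, which matches the corresponding boundary feature from \cite{cromme03}. Condition (b) says that whenever $t_j$ is interior to $(x_i,x_{i+1})$ with $i = p(j)/2$, no other knot of $s$ coincides with $x_i$ or $x_{i+1}$; this is the isolation-of-interior-knots feature. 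Condition (d) encodes simplicity of the knots together with a separation property: the lower bound of $2$ forces $t_j < t_{j+1}$ in the mixed cases, whereas the lower bound of $4$ for two consecutive even entries forces two consecutive interior knots to be separated by at least one data abscissa.

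The only case requiring a short calculation is (c). Here $p(j)$ and $p(j+l+1)$ are even and $p(j+1),\ldots,p(j+l)$ are odd, so $t_j$ and $t_{j+l+1}$ are consecutive interior knots enclosing exactly the $l$ data knots $t_{j+1}=x_{(p(j+1)+1)/2},\ldots,t_{j+l}=x_{(p(j+l)+1)/2}$. The number of data abscissae strictly between $t_j$ and $t_{j+l+1}$ equals $p(j+l+1)/2 - p(j)/2$, so the inequality in (c) asserts that this count is at least $l+2$. Equivalently, besides the $l$ abscissae already occupied by inner data knots, at least two further abscissae must fall in the two flanking open intervals that, by (b), separate the data knots from the enclosing interior knots. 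This is exactly the segment-size feature from Theorem 12 in \cite{cromme03} guaranteeing that the linear least-squares subproblem on each segment is non-degenerate.

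Once each of the four equivalences has been verified in both directions, the lemma follows. The main obstacle, and the only non-mechanical step, is the counting argument in case (c); everything else is a direct lookup in the dictionary between $p$ and $t_1,\ldots,t_k$ provided by Definition \ref{eq:defPositionsvektor}(b) and Corollary 2.3(b).
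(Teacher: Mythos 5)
Your overall strategy coincides with the paper's: the published proof is itself only a condition-by-condition translation between Definition \ref{eq:defRegularPositionsvektor} and Theorem 12 of \cite{cromme03}, carried out in detail only for feature (d) as a representative example. Your translations of (a) and (b) are correct, and your counting argument for (c) is exactly right: with $p(j)=2a$ and $p(j+l+1)=2b$ there are $b-a$ abscissae strictly between the two interior knots, of which $l$ are data knots, so ``at least two non-knot abscissae'' is $b-a-l\geq 2$, i.e.\ the stated inequality.

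However, your reading of condition (d) is wrong, and (d) is precisely the case the paper works out. Definition \ref{eq:defRegularPositionsvektor}\,(d) is equivalent to Theorem 12\,(d), namely that \emph{on or between any two neighboring knots lie at least two data abscissae} --- not to ``simplicity plus separation by at least one abscissa.'' Concretely: for two consecutive interior knots $t_j\in(x_a,x_{a+1})$, $t_{j+1}\in(x_b,x_{b+1})$, the abscissae between them number $b-a=\tfrac{p(j+1)-p(j)}{2}$, so $p(j+1)-p(j)\geq 4$ says there are at least \emph{two} of them; ``at least one,'' as you claim, would only yield $p(j+1)-p(j)\geq 2$ and would not reproduce the bound $4$. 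Likewise, in the mixed case $p(j)=2a$ even, $p(j+1)=2c-1$ odd, the bound $p(j+1)-p(j)\geq 2$ forces $c\geq a+2$, i.e.\ at least two abscissae in $[t_j,t_{j+1}]$, which is strictly stronger than $t_j<t_{j+1}$ (that would already follow from $c\geq a+1$). If you carried out the verification with the geometric content you assigned to (d), the equivalence would fail in both directions for this feature; you need to redo the four parity cases against the correct statement of Theorem 12\,(d), as the paper does.
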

\begin{proof}
Lemma \ref{eq:LemmaAequivalenz} reflects the fact that properties
(a) to (d) in Definition \ref{eq:defRegularPositionsvektor} for components $p(1),\ldots,p(k)$ guarantee features (a) to (d) in Theorem 12 in \cite{cromme03} and vice versa. As an example let us elaborate (d):

\vspace{0.1cm}
\noindent{}So let $p$ be a position vector of a spline $s\in S_k^1[a,b]$, $t_j$ and $t_{j+1}$ knots of $s$. 
\begin{itemize}
	\item[-] Case 1: $t_j$ and $t_{j+1}$are interior knots, i.e. $p(j)$ and $p(j+1)$ are even. Then at least two data abscissae are situated between $t_j$ and $t_{j+1}$ iff
\begin{equation*}
\frac{p(j+1)}{2} - \left(\frac{p(j)}{2}+1\right) + 1 \;\geq\; 2 \quad\quad\Leftrightarrow\quad\quad p(j+1) - p(j) \;\geq\; 4 \quad.
\end{equation*}
  \item[-] Case 2: $t_j$ is an interior and $t_{j+1}$ a data knot, i.e., $p(j)$ is even and $p(j+1)$ is odd. Then at least two data abscissae lie between or on $t_j$ and $t_{j+1}$ iff 
\begin{equation*}
\frac{p(j)}{2} + 1 \;<\; \frac{p(j+1)+1}{2} \Leftrightarrow p(j+1) - p(j) \;>\; 1 \Leftrightarrow p(j+1) - p(j) \;\geq\; 2 \;\;.
\end{equation*}
\end{itemize}
The equivalence of feature (d) from Theorem 12 in \cite{cromme03} to the requirement
\begin{equation*}
p(j+1) - p(j) \;\geq\; 2
\end{equation*}
is shown with similar arguments in
\begin{itemize}
	\item[-] Case 3: $t_j$ is data knot, $t_{j+1}$ interior knot, that is, $p(j)$ is odd and $p(j+1)$ is even,
	\item[-] Case 4: $t_j$ and $t_{j+1}$ are data knots, that is, $p(j)$ and $p(j+1)$ are odd.
\end{itemize}
Altogether this proves the equivalence of \ref{eq:defRegularPositionsvektor} (d) and Theorem 12 (d) from \cite{cromme03}. \hfill $\square$
\end{proof}

\vspace{0.2cm}
\begin{corollary}
For a regular position vector the following statements hold true:
\begin{equation*}
\big\{p(1),\ldots,p(k)\big\}\subseteq\big\{1,2,\ldots,2\mu-1\big\} \quad\text{and}\quad p(1) < p(2) < \ldots < p(k) \quad.
\end{equation*}
\end{corollary}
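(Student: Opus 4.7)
The plan is to read off both assertions essentially directly from Definition \ref{eq:defRegularPositionsvektor}, since the corollary is a bookkeeping statement rather than a new fact. No auxiliary constructions are needed.

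First I would note that the containment
\begin{equation*}
\{p(1),\ldots,p(k)\}\subseteq\{1,2,\ldots,2\mu-1\}
\end{equation*}
is nothing but clause (a) of Definition \ref{eq:defRegularPositionsvektor} verbatim, so there is nothing to prove there.

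For the strict monotonicity $p(1)<p(2)<\cdots<p(k)$, I would invoke clause (d) of Definition \ref{eq:defRegularPositionsvektor}. It supplies, for every $j\in\{1,\ldots,k-1\}$, the inequality $p(j+1)-p(j)\geq 4$ when $p(j)$ and $p(j+1)$ are both even, and $p(j+1)-p(j)\geq 2$ in the remaining cases. In either case $p(j+1)-p(j)\geq 2>0$, hence $p(j)<p(j+1)$. Chaining these inequalities through $j=1,2,\ldots,k-1$ yields $p(1)<p(2)<\cdots<p(k)$.

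There is no real obstacle here; the only thing to watch is that Definition \ref{eq:defPositionsvektor}(a) a priori admits equalities $p(j)=p(j+1)$ (when the shared value is even), so one has to be explicit that the regularity condition (d) rules out exactly those equalities. Once that is pointed out, the proof is a one-line consequence of clauses (a) and (d). \hfill $\square$
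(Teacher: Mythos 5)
Your proof is correct and matches the paper's argument exactly: the paper's entire proof is "See Definition \ref{eq:defRegularPositionsvektor} (a) and (d)", i.e.\ the containment is clause (a) verbatim and strict monotonicity follows from the gap of at least $2$ guaranteed by clause (d). Your additional remark that regularity rules out the equalities otherwise permitted by Definition \ref{eq:defPositionsvektor}(a) is a sensible clarification, not a deviation.
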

\begin{proof}
See Definition \ref{eq:defRegularPositionsvektor} (a) and (d).
\end{proof}

\vspace{0.2cm}
\begin{lemma}\label{eq:posvekanzahl}
The number of regular position vectors grows exponentially in $\mu$.
\end{lemma}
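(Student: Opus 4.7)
The plan is to exhibit a simple injection from a combinatorially large family into the set of regular position vectors, and then count. The decisive observation is that when every component of $p$ is odd, the premises of Definition \ref{eq:defRegularPositionsvektor}{} (b) and (c) are never triggered (both require an even component), so those conditions hold vacuously; condition (a) reduces to $p(j)\in\{1,3,\ldots,2\mu-1\}$; and condition (d) becomes simply $p(j+1)-p(j)\geq 2$.

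Concretely, I would associate to each $k$-subset $\{i_1<i_2<\cdots<i_k\}\subseteq\{1,\ldots,\mu\}$ the vector $p$ defined by $p(j):=2i_j-1$. Every $p(j)$ is an odd integer in $\{1,3,\ldots,2\mu-1\}$, and $p(j+1)-p(j)=2(i_{j+1}-i_j)\geq 2$ since the $i_j$ are strictly increasing. Hence $p$ is a regular position vector of length $k$, and distinct subsets obviously yield distinct vectors. This already shows that the number of regular position vectors of length $k$ is at least $\binom{\mu}{k}$.

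To extract an exponential lower bound in $\mu$, I would specialise to $k=\lfloor\mu/2\rfloor$ (which is compatible with the standing hypothesis $1\leq k\leq\mu-1$ for $\mu\geq 2$) and invoke the elementary estimate
\begin{equation*}
\binom{\mu}{\lfloor\mu/2\rfloor}\;\geq\;\frac{2^\mu}{\mu+1},
\end{equation*}
which grows exponentially in $\mu$ and yields the claim.

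There is essentially no real obstacle here: the only substantive step is spotting that the all-odd vectors form a very large regular sub-family because they render the even-component conditions vacuous. After that, the verification of the four items in Definition \ref{eq:defRegularPositionsvektor} is immediate, and the exponential bound is the standard central-binomial estimate.
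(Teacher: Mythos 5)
Your proof is correct and takes essentially the same approach as the paper's: both count the all-odd regular position vectors, identify them with $k$-element subsets of $\{1,3,\ldots,2\mu-1\}$ (equivalently of $\{1,\ldots,\mu\}$), and choose $k\approx\mu/2$ to obtain exponential growth. Your write-up is in fact slightly more explicit than the paper's, since you verify that conditions (b)--(d) of Definition \ref{eq:defRegularPositionsvektor} hold for the all-odd vectors and you state the central-binomial lower bound $\binom{\mu}{\lfloor\mu/2\rfloor}\geq 2^\mu/(\mu+1)$ rather than leaving the exponential growth implicit.
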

\begin{proof}
According to Definition \ref{eq:defRegularPositionsvektor} a position vector is regular if all $k$ components $p(1) < p(2) < \ldots < p(k)$ are odd, i.e. 
\begin{equation*}
p(j) \in \big\{1,3,5,\ldots,2\mu-3,2\mu-1\big\}\;,\;\; j=1,\ldots,k \quad.
\end{equation*}
In such cases all knots of related splines are data knots. So there are at least as many regular position vectors as there are subsets of  $\left\{1,3,5,\ldots,2\mu-1\right\}$ with $k$ elements. According to standard results in combinatorics there are $\binom{\mu}{\mu-k} = \frac{\mu!}{(\mu-k)!k!}$ such subsets. With growing $\mu$ and $k=\frac{\mu}{2}$ for example exponential growth of the number of regular position vectors is therefore programmed. \hfill $\square$
\end{proof}

\vspace{0.2cm}
The fast growth of the number of regular position vectors is evident in Table \ref{tab:tabRegPosVek}:

\begin{table}
\caption{Number of regular position vectors as a function of knots and data points.}
\label{tab:tabRegPosVek}
\begin{center}
\begin{tabular}{r|rrrrrr}  
\hline\noalign{\smallskip}
number of & \multicolumn{6}{c}{number of data points $\mu+2$} \\                                     
knots $k$ & 15 & 20 & 25 & 30 & 35 & 40 \\
\noalign{\smallskip}\hline\noalign{\smallskip}                    
1 & 25 & 35 & 45 & 55 & 65 & 75 \\                            
2 & 265 & 545 & 925 & 1405 & 1985 & 2665 \\                   
3 & 1561 & 4991 & 11521 & 22151 & 37881 & 59711 \\            
4 & 5641 & 29961 & 97281 & 241601 & 506921 & 947241 \\        
5 & 13073 & 124515 & 590557 & 1937199 & 5060441 & 11326283 \\ 
6 & 19825 & 369305 & 2668525 & 11847485 & 39146185 & 106114625 \\  
7 & 19825 & 795455 & 9173505 & 56610575 & 249673265 & 799538175 \\
\hline\noalign{\smallskip}                 
\end{tabular}
\end{center}
\end{table}

\vspace{0.2cm}
We can now specify the algorithm for the solution of Approximation problem \ref{eq:approxprobelmdef}: The algorithm examines systematically all regular position vectors to see if they might code the position of the knots of a best approximating spline $s\in S^1_k[a,b]$.
\begin{algorithm}\label{eq:basisalgorithmus}
\mbox{}
\begin{itemize}
	\item[1.] Set $\varrho := \infty$.
	\item[2.] For the next regular position vector $p$ of length $k$ for the data abscissae $x_0,\ldots,x_{\mu+1}$ which has not yet been examined carry out exactly one of the following two steps 3 and 4:
	
	\item[3.] If all $p(i)$ for $i=1,2,\ldots,k$ are odd compute the best approximation \newline $s \in S^1\left(x_{\frac{p(1)-1}{2}},\ldots,x_{\frac{p(k)-1}{2}}\right)$ to the data $\left(x_0,f_0\right),\ldots,\left(x_{\mu+1},f_{\mu+1}\right)$. The spline $s$ is thus the solution of a discrete linear least-squares approximation problem with fixed knots. If $\big\|F-s(X)\big\|_2 < \varrho$, set $\varrho := \big\|F-s(X)\big\|_2$, $s^\ast := s$. Continue with step 2.
	
	\item[4.] If at least one component $p(i)$ is even, i.e., for a suitable $r\in\{1,\ldots,k\}$ there are exactly $r$ interior knots, carry out the following steps; notations are as in Definition \ref{definitonSegmente}, in addition $i_0 := -1, i_{r+1}:=\mu+2, j_0 := 0, j_{r+1} := k+1$, \mbox{$q_0 := j_1 -1$}, $q_r := k-j_r$:
\begin{itemize}
	\item[(a)] For each segment $S_l$ $\left(l=0,1,\ldots,r\right)$ compute the best least-squares approximation $s_l$ to the data $\left(x_{i_l+1},f_{i_l+1}\right), \ldots, \left(x_{i_{l+1}},f_{i_{l+1}}\right)$. \; Here $s_l$ is to chosen from $\mathbb{P}_1$ if $q_l = 0$ and
\begin{equation*}
s_l\in S^1\Big(t_{j_l+1},\ldots,t_{j_l+q_l}\Big) \;\;=\;\; S^1\left(x_{\frac{p\left(j_l+1\right)+1}{2}},\ldots,x_{\frac{p\left(j_{l+1}-1\right)+1}{2}}\right)
\end{equation*}
if $q_l > 0$.
	\item[(b)] For all $l=1,\ldots,r$ check whether the splines $s_{l-1}$ and $s_l$ intersect in some point $z_l \in \left(x_{i_l},x_{i_l+1}\right)$. If this is the case for all $l$, $l=1,\ldots,r$ define
\begin{equation*}
s(x) := \left\{
\begin{array}{lcl}
s_0(x) & , & x_0 \leq x \leq z_1 \\
s_l(x) & , & z_l < x \leq z_{l+1} \;,\; 1 \leq l \leq r-1 \\
s_r(x) & , & z_r < x \leq x_{\mu+1} \hspace{3.0cm}.
\end{array}
\right.
\end{equation*}
If $\big\|F-s(X)\big\|_2 < \varrho$ set $\varrho := \big\|F-s(X)\big\|_2$, $s^\ast := s$ and procede with step 2. If for some $l\in\{1,\ldots,r\}$ splines $s_{l-1}$ and $s_l$ do not intersect in $\left(x_{i_l},x_{i_l+1}\right)$ reject the position vector $p$ under investigation and continue with step 2.
\end{itemize}
Procede with step 2.

  \item[5.] When all regular position vectors $p$ of length $k$ have been examined $s^\ast$ is the wanted best approximation and $\varrho$ the corresponding approximation error. End.
\end{itemize}
\end{algorithm}

%%%%%%%%%%%%%%%%%%%%%%%%%%%%%%%%%
%%%%%%%%%%%%%%%%%%%%%%%%%%%%%%%%%
% Section 2 - Theorem 2.11 to end
%%%%%%%%%%%%%%%%%%%%%%%%%%%%%%%%%
%%%%%%%%%%%%%%%%%%%%%%%%%%%%%%%%%

\vspace{0.2in}
\noindent{}The proof that Algorithm \ref{eq:basisalgorithmus} solves the approximation problem is based on Theorem 12 in \cite{cromme03}:
\begin{theorem}\label{eq:konvergenzsatz}
Algorithm \ref{eq:basisalgorithmus} stops after a finite number of steps and delivers with $s^{\ast}$ a best approximation 
\begin{equation*}
\big\|f-s^\ast\big\|_{2,X} \;\;=\;\; \inf_{s\in S^1_k[x_0,x_{\mu+1}]}\big\|f-s\big\|_{2,X} \quad.
\end{equation*}
\end{theorem}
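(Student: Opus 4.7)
The plan is to split the argument into termination and correctness. Termination is immediate: the outer loop iterates over regular position vectors of length $k$, and since (by the corollary following Lemma~\ref{eq:LemmaAequivalenz}) the components of such a vector are strictly increasing in $\{0,1,\ldots,2\mu\}$, there are only finitely many of them. Each inner iteration performs a bounded number of linear least-squares solves, at most $k$ line intersections, and one residual-norm comparison, so the procedure halts after finitely many arithmetic operations.

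For correctness my strategy is to show that whenever the algorithm visits the position vector $\bar p$ of some best approximation $\bar s \in S^1_k[a,b]$, it produces a spline with residual norm equal to $\|F-\bar s(X)\|_2$; this is enough because $\varrho$ only decreases. The existence of such a $\bar s$ with regular $\bar p$ follows by combining Remark~\ref{eq:bemerkungMindesanforderungen} with Lemma~\ref{eq:LemmaAequivalenz}: I may assume $\bar s$ has only simple knots satisfying properties (a)--(d) of Theorem~12 in \cite{cromme03}, and then $\bar p$ is regular, so Algorithm~\ref{eq:basisalgorithmus} indeed examines $\bar p$ at some iteration.

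At that iteration I would invoke the local-optimality principle sketched in the introduction. On each segment $S_l$ the restriction $\bar s|_{S_l}$ is itself a best $\ell^{2}$-approximation to the segment's data from the fixed-knot spline space $S^{1}(t_{j_l+1},\ldots,t_{j_l+q_l})$, for otherwise a strict local improvement could be patched into $\bar s$ by shifting the neighbouring interior knots $\bar t_{j_l}$, $\bar t_{j_{l+1}}$ slightly within $(x_{i_l},x_{i_l+1})$ respectively $(x_{i_{l+1}},x_{i_{l+1}+1})$ so as to preserve continuity, contradicting the global optimality of $\bar s$. The regularity of $\bar p$, in particular Definition~\ref{eq:defRegularPositionsvektor}(c)--(d), then guarantees that each local linear least-squares problem has a unique minimiser, so the $s_l$ computed in step~4(a) coincides with $\bar s|_{S_l}$. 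Its natural linear extrapolation therefore meets that of $s_{l-1}$ exactly at $\bar t_{j_l}\in(x_{i_l},x_{i_l+1})$, the intersection test in step~4(b) succeeds, and the assembled spline reproduces $\bar s$ (up to harmless repositioning of inactive knots). Hence $\varrho$ is updated to $\|F-\bar s(X)\|_2$, which is optimal.

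The main obstacle I anticipate is the uniqueness claim for the local least-squares problems, which is what forces the intersection test to succeed at the ``correct'' position vector. Establishing this requires a careful count of data abscissae in each inter-knot subinterval of a segment, together with a proof that the regularity conditions make the associated local B-spline design matrix of full column rank. Once this bookkeeping is done, the intersection points $z_l$ are pinned down to the knots of $\bar s$, the assembly step necessarily yields a spline with the optimal residual, and Theorem~\ref{eq:konvergenzsatz} follows.
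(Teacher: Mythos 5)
Your proposal follows essentially the same route as the paper: termination from the finiteness of regular position vectors, existence of a best approximation whose position vector is regular (via Theorem 12 of \cite{cromme03} together with Lemma \ref{eq:LemmaAequivalenz}), and the perturbation (``patching'') argument showing that the optimal spline restricted to each segment is already a best local fixed-knot approximation, so that the algorithm reassembles it when it reaches the corresponding position vector. The uniqueness of the local least-squares minimiser, which you flag as the remaining obstacle for forcing the intersection test to succeed, is passed over silently in the paper's own proof of the accompanying Assertion, so your treatment is, if anything, slightly more explicit on that point.
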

\begin{proof}
Let $s^{\ast}$ be a best approximation whith the additional features from Theorem 12 in \cite{cromme03}. Because of Lemma \ref{eq:LemmaAequivalenz} the position vector $p^{\ast}$ of $s^{\ast}$ is then regular and is therefore treated in Algorithm \ref{eq:basisalgorithmus}.
\end{proof}

\vspace{0.1cm}
\noindent{}Furthermore, the following assertion holds:
\begin{assertion}
When Algorithm \ref{eq:basisalgorithmus} elaborates on $p^{\ast}$ the function computed in step 3 or step 4 by the algorithm ist just $s^{\ast}$.
\end{assertion}

\vspace{0.1cm}
\begin{proof}
a) If step 3 is executed (all $p^\ast(i)$ are odd) then $s^{\ast}$ as best approximation from $S^1_k\left[a,b\right]$ is even more best approximation from the smaller set $S^1\left(x_{\frac{p(1)+1}{2}},\ldots,x_{\frac{p(k)+1}{2}}\right)$ which $s^{\ast}$ is also an element of.

b) If step 4 is executed every approximation $s_i$ on a segment is identical with $s^{\ast}$ in its domain. For simplicity we demonstrate this only for the first segment $S_0$. 

Assumption: $s_0$ is a better linear approximation to the data $\left(x_0,f_0\right),\ldots$, $\left(x_{\frac{p\left(j_1\right)}{2}}, f_{\frac{p\left(j_1\right)}{2}}\right)$ than $s^{\ast}$. Any sufficiently small shift of $s^{\ast}$ in the direction of $s_0$ improves the approximation error.  Because $\left(s^{\ast}\right)'$ has a jump discontinuity in $t_{j_1}$ (see Theorem 12 (h) in \cite{cromme03}), the modification of $s^\ast$ in direction $s_0$ as described above results in a $s^{\ast}_{new}\in S_k^1[a,b]$ with $\left\|F-s^{\ast}_{neu}(X)\right\|_2 < \left\|F-s^{\ast}(X)\right\|_2$ contradicting the optimality of $s^{\ast}$.

The above assumption must therefore be wrong and the assertion b) holds. $s^\ast$ is thus an element of $K \subset S^1_k[x_0,x_{\mu+1}]$, the splines computed by the algorithm. Therefore the minimal approximation error is realized by the algorithm and a best approximation computed. \hfill $\square$
\end{proof}

%%%%%%%%%%%%%%%%%%%%%%%%%%%
%%%%%%%%%%%%%%%%%%%%%%%%%%%
% Section 3
%%%%%%%%%%%%%%%%%%%%%%%%%%%
%%%%%%%%%%%%%%%%%%%%%%%%%%%
\vspace{0.2cm}

\setcounter{equation}{0}
\section{Realization, simplification and acceleration}
\label{sec:pc}
As can be seen from Lemma \ref{eq:posvekanzahl} the computational cost and computing time grows exponentially with the number of data $\mu+2$ causing a corresponding memory demand. It ist advisable to restrain from computing and storing all (regular) position vectors since this would cause an exponential demand for memory. Instead, when a newly regular position vector is found, steps 3 und 4 should follow right away such that the position vector - once examined - can be deleted.

The algorithm is suitable for parallelization. Steps 3 and 4 can be executed for several regular position vectors in parallel.

Segments $S_0$ to $S_r$ and their best approximations $s_0,\ldots,s_r$ could also be examined in parallel for intersection points. But this implementation is not advisable because it might lead to unnecessary computations: If the approxiomations $s_0,s_1,\ldots,s_r$ on segments $S_0,S_1,\ldots,S_r$ are computed in parallel and for example $s_0$ and $s_1$ do not intersect in $\big(x_{i_0},x_{i_0+1}\big)$, then the position vector under examination can already be dismissed and the computation of $s_2,\ldots,s_r$ (for $r \geq 2$) in the parallel realization of the algorithm would be an additional and unnecessary effort.

In this respect more effective is sequential programming, starting step 4, computing $s_0$ and $s_1$, and then check whether they intersect in  $\big(x_{i_0},x_{i_0+1}\big)$. In analogy a best approximation $s_{l+2}$ on the next segment $S_{l+2}$ is determined only if $s_l$ and $s_{l+1}$ have been found to intersect in $\big(x_{i_l},x_{i_l+1}\big)$. Otherwise, the position vector currently under investigation is sorted out and computing further approximations related to this position vector can be skipped.

Furthermore, the algorithm should be programmed such that if the program terminates prematurely because of time limitations at least the best approximation found so far is saved and handed over to the main program. If position vectors of good approximations are dealt with early the consequences of a premature termination can be further limited.

If there is prior knowledge about the position of knots of a best approximation related position vectors should be examined with priority.

Which clues for the optimal position of knots can be deduced from the data? Besides the divided differences of first order
\begin{equation*}
\Delta_j^{(1)} \;:=\; \frac{f_{j+1}-f_j}{x_{j+1}-x_j} \;\;\;,\;\; 0\leq j\leq \mu
\end{equation*}
the (central) second order divided differences
\begin{equation*}
\Delta_j^{(2)} \;:=\; \frac{f_{j+1} \,-\,2 f_j \,+\,f_{j-1}}{\left(x_{j+1} \,-\, x_j\right)^2} \;\;\;,\;\; 1 \leq j \leq \mu \quad.
\end{equation*}
can be helpful here. The knots of a best-approximating first-order spline could be expected in the neighborhood of data abscissae $x_j$ where the $\Delta_j^{(1)}$ vary heavily and the absolute values of the $\Delta_j^{(2)}$ are "large". Related positions vectors should therefore be examined with priority.

%%%%%%%%%%%%%%%%%%%%%%%%%%%%%
%%%%%%%%%%%%%%%%%%%%%%%%%%%%%
% Section 4
%%%%%%%%%%%%%%%%%%%%%%%%%%%%%
%%%%%%%%%%%%%%%%%%%%%%%%%%%%%
\vspace{0.2cm}
\setcounter{equation}{0}
\label{sec:examples}
\section{Numerical examples and applications}
In this section we present numerical examples to illustrate the performance of Algorithm \ref{eq:basisalgorithmus}. For each example we list the knots and the approximation error and present the solutions in graphical form, but refrain from enumerating the coefficients of the best approximations in all examples.

%%%%%%%%%%%%%%%%%%%%%%%%%%%
% Example 4.1
%%%%%%%%%%%%%%%%%%%%%%%%%%%
\begin{exmp}\label{allgBsp1}
Let $\mu=15$, $x_i := i$ for $i=0,1,\ldots,16$, $f_i := 1$ for $i=0,1,\ldots,7,9,\ldots,16$ and for $f_8 := 2$ be given. For $k = 1,2,3,4,5$ Algorithm \ref{eq:basisalgorithmus} computes best approximations $s_k \in S^1_k\left[t_0,t_{16}\right]$ with knots $t_1,\ldots,t_k$ given in the following table:
\begin{equation*}
\begin{array}{l|lllll|r}
 & t_1 & t_2 & t_3 & t_4 & t_5 & \left\|F-s_k(X)\right\|_2 \\
\hline
\hline
k=1 & 8 & & & & & 0.87586 \\
k=2 & 7 & 8 & & & & 0.78881 \\
k=3 & 7 & 8 & 9 & & & 0 \\
k=4 & 4 & 7 & 8 & 9 & & 0 \\
k=5 & 1 & 2 & 7 & 8 & 9 & 0
\end{array}
\end{equation*}
For $k=1,2,3$ $s_k$ has exactly $k$ proper knots. To approximate the data for $k \geq 4$ only $3$ proper knots are needed and the remaining $k-3$ knots are not proper knots; see Fig. \ref{fig3}.
\end{exmp}

% ====================
% Figure 3
% ====================
\begin{figure}[h]
\begin{center}
%\fbox{
\scalebox{0.75}{
\begin{picture}(445,180)
\color{black}
%x-Achse und y-Achse
\put(15,20){\vector(1,0){355}}
\put(20,15){\vector(0,1){163}}
%Einteilung und Beschriftung der x-Achse
\put(17,4){\small $0$}
\put(40.9375,16){\line(0,1){8}}
\put(30.4688,18){\line(0,1){4}}
\put(37.9375,4){\small $1$}
\put(61.875,16){\line(0,1){8}}
\put(51.4063,18){\line(0,1){4}}
\put(58.875,4){\small $2$}
\put(82.8125,16){\line(0,1){8}}
\put(72.3438,18){\line(0,1){4}}
\put(79.8125,4){\small $3$}
\put(103.75,16){\line(0,1){8}}
\put(93.2813,18){\line(0,1){4}}
\put(100.75,4){\small $4$}
\put(124.6875,16){\line(0,1){8}}
\put(114.2188,18){\line(0,1){4}}
\put(121.6875,4){\small $5$}
\put(145.625,16){\line(0,1){8}}
\put(135.1563,18){\line(0,1){4}}
\put(142.625,4){\small $6$}
\put(166.5625,16){\line(0,1){8}}
\put(156.0938,18){\line(0,1){4}}
\put(163.5625,4){\small $7$}
\put(187.5,16){\line(0,1){8}}
\put(177.0313,18){\line(0,1){4}}
\put(184.5,4){\small $8$}
\put(208.4375,16){\line(0,1){8}}
\put(197.9688,18){\line(0,1){4}}
\put(205.4375,4){\small $9$}
\put(229.375,16){\line(0,1){8}}
\put(218.9063,18){\line(0,1){4}}
\put(223.375,4){\small $10$}
\put(250.3125,16){\line(0,1){8}}
\put(239.8438,18){\line(0,1){4}}
\put(244.3125,4){\small $11$}
\put(271.25,16){\line(0,1){8}}
\put(260.7813,18){\line(0,1){4}}
\put(265.25,4){\small $12$}
\put(292.1875,16){\line(0,1){8}}
\put(281.7188,18){\line(0,1){4}}
\put(286.1875,4){\small $13$}
\put(313.125,16){\line(0,1){8}}
\put(302.6563,18){\line(0,1){4}}
\put(307.125,4){\small $14$}
\put(334.0625,16){\line(0,1){8}}
\put(323.5938,18){\line(0,1){4}}
\put(328.0625,4){\small $15$}
\put(355,16){\line(0,1){8}}
\put(344.5313,18){\line(0,1){4}}
\put(349,4){\small $16$}
\put(365,4){\small $x$}
%Einteilung und Beschriftung der y-Achse
\put(16,68.75){\line(1,0){8}}
\put(7,65.75){\small $1$}
\put(16,166.25){\line(1,0){8}}
\put(18,117.5){\line(1,0){4}}
\put(7,163.25){\small $2$}
%Vorgaben(t_i,f_i)
\color{black}
\put(20,68.75){\circle*{4}}
\put(40.9375,68.75){\circle*{4}}
\put(61.875,68.75){\circle*{4}}
\put(82.8125,68.75){\circle*{4}}
\put(103.75,68.75){\circle*{4}}
\put(124.6875,68.75){\circle*{4}}
\put(145.625,68.75){\circle*{4}}
\put(166.5625,68.75){\circle*{4}}
\put(187.5,166.25){\circle*{4}}
\put(208.4375,68.75){\circle*{4}}
\put(229.375,68.75){\circle*{4}}
\put(250.3125,68.75){\circle*{4}}
\put(271.25,68.75){\circle*{4}}
\put(292.1875,68.75){\circle*{4}}
\put(313.125,68.75){\circle*{4}}
\put(334.0625,68.75){\circle*{4}}
\put(355,68.75){\circle*{4}}                                
\dottedline[\circle*{1}]{5}(20,59.4007)(187.5,91.4555)(355,59.4007)
\thicklines 
\drawline[-35](20,68.75)(166.5625,68.75)
\drawline[-30](166.5625,68.75)(187.5,105.5833)
\drawline[-35](187.5,105.5833)(355,53.5833)
\thinlines
\dottedline[\circle*{1}](20,68.75)(166.5625,68.75)(187.5,166.25)(208.4375,68.75)(355,68.75)
%Legende
\color{black}
%\put(375,1){\line(0,1){180}}
\put(396,170){\circle*{4}}
\put(412,167){$\left(t_i,f_i\right)$}
\dottedline[\circle*{1}]{5}(384,155)(410,155)
\thicklines
\drawline[-30](384,140)(410,140)
\thinlines
\dottedline[\circle*{1}](384,125)(410,125)
\put(415,152){\small $k=1$}
\put(415,137){\small $k=2$}
\put(415,122){\small $k \geq 3$}
\end{picture}
} % END \scalebox
%} % END \fbox
\end{center}
\caption{Best approximation from Example \ref{allgBsp1}.}
\label{fig3}
\end{figure}
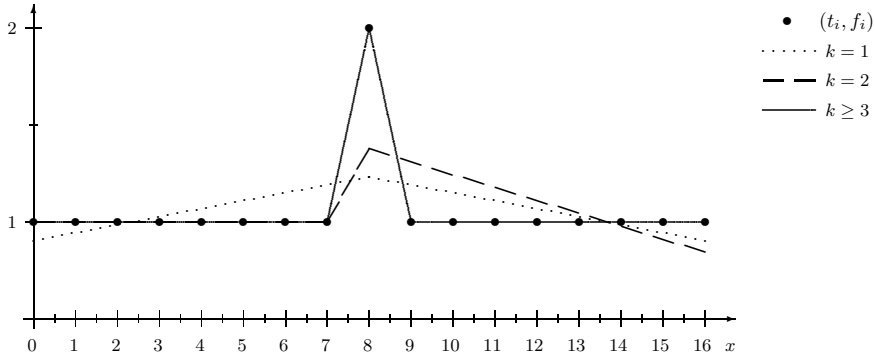

%%%%%%%%%%%%%%%%%%%%%%%%%%%%%%%
% Example 4.2
%%%%%%%%%%%%%%%%%%%%%%%%%%%%%%%
\vspace{0.2in}
\begin{exmp}\label{allgBsp2}
The so-called Titanium Heat Data have been dicused in literature and approximated from a number of approximation families; e.g. \cite{deBoor} and \cite{jupp}. The data describe a property of titanium as function of temperature. The measuring points are equidistant in the interval $[595,1075]$. The least-squares approximation with broken lines delivers a good approximation as can seen in Fig. \ref{fig4} and the following table::
\begin{small}
\begin{equation*}
\begin{array}{r||r||c|rrrrr}
k & \|F-s_k(X)\|_2 & i & 1 & 2 & 3 & 4 & 5 \\
\hline
\hline
3 & 0.2632 & t_i & 858.4883 & 897.8327 & 940.2917 &  &  \\ 
 &  & s_k(t_i) & 0.7642 & 2.3065 & 0.6659 &  &  \\ 
\hline
4 & 0.1875 & t_i & 831.4392 & 866.8552 & 897.5429 & 940.2917 &  \\ 
 &  & s_k(t_i) & 0.7074 & 1.0396 & 2.3177 & 0.6659 &  \\ 
\hline
5 & 0.1349 & t_i & 831.4392 & 866.8552 & 898.3019 & 930.6129 & 958.3397 \\ 
 &  & s_k(t_i) & 0.7074 & 1.0396 & 2.3494 & 0.9535 & 0.6153 \\ 
\end{array}
\end{equation*}
\end{small}
\end{exmp}

% ========================
% Figure 4
% ========================
\begin{figure}[h]
\begin{center}
%\fbox{
\resizebox{1.0\textwidth}{!}{
\begin{tikzpicture}     
% x-Achse und y-Achse      
\draw[->,color=black,>=stealth] (-0.2,0) -- (15.5,0) node[below=2pt] {\footnotesize $x$}; 
\draw[->,color=black,>=stealth] (0,-0.2) -- (0,7.2);     
% Einteilung und Beschriftung der x-Achse    
\draw (0,0-.2)--(0,0) node[below=6pt]{\small 595};    
\draw (0.625,0-.1)--(0.625,0);      
\draw (1.25,0-.2)--(1.25,0) node[below=6pt]{\small 635};    
\draw (1.875,0-.1)--(1.875,0);      
\draw (2.5,0-.2)--(2.5,0) node[below=6pt]{\small 675};   
\draw (3.125,0-.1)--(3.125,0);      
\draw (3.75,0-.2)--(3.75,0) node[below=6pt]{\small 715};    
\draw (4.375,0-.1)--(4.375,0);      
\draw (5,0-.2)--(5,0) node[below=6pt]{\small 755};    
\draw (5.625,0-.1)--(5.625,0);      
\draw (6.25,0-.2)--(6.25,0) node[below=6pt]{\small 795};    
\draw (6.875,0-.1)--(6.875,0);      
\draw (7.5,0-.2)--(7.5,0) node[below=6pt]{\small 835};   
\draw (8.125,0-.1)--(8.125,0);      
\draw (8.75,0-.2)--(8.75,0) node[below=6pt]{\small 875};    
\draw (9.375,0-.1)--(9.375,0);      
\draw (10,0-.2)--(10,0) node[below=6pt]{\small 915};     
\draw (10.625,0-.1)--(10.625,0);    
\draw (11.25,0-.2)--(11.25,0) node[below=6pt]{\small 955};  
\draw (11.875,0-.1)--(11.875,0);    
\draw (12.5,0-.2)--(12.5,0) node[below=6pt]{\small 995};    
\draw (13.125,0-.1)--(13.125,0);    
\draw (13.75,0-.2)--(13.75,0) node[below=6pt]{\small 1035};    
\draw (14.375,0-.1)--(14.375,0);    
\draw (15,0-.2)--(15,0) node[below=6pt]{\small 1075};    
% Einteilung und Beschriftung der y-Achse    
\draw (0-.2,0)--(0,0) node[left=6pt]{\small 0.5};     
\draw (0-.1,0.18421)--(0,0.18421);     
\draw (0-.2,0.36842)--(0,0.36842) node[left=6pt]{\small 0.6};  
\draw (0-.1,0.55263)--(0,0.55263);     
\draw (0-.2,0.73684)--(0,0.73684) node[left=6pt]{\small 0.7};  
\draw (0-.1,0.92105)--(0,0.92105);     
\draw (0-.2,1.1053)--(0,1.1053) node[left=6pt]{\small 0.8};    
\draw (0-.1,1.2895)--(0,1.2895);    
\draw (0-.2,1.4737)--(0,1.4737) node[left=6pt]{\small 0.9};    
\draw (0-.1,1.6579)--(0,1.6579);    
\draw (0-.2,1.8421)--(0,1.8421) node[left=6pt]{\small 1};   
\draw (0-.1,2.0263)--(0,2.0263);    
\draw (0-.2,2.2105)--(0,2.2105) node[left=6pt]{\small 1.1};    
\draw (0-.1,2.3947)--(0,2.3947);    
\draw (0-.2,2.5789)--(0,2.5789) node[left=6pt]{\small 1.2};    
\draw (0-.1,2.7632)--(0,2.7632);    
\draw (0-.2,2.9474)--(0,2.9474) node[left=6pt]{\small 1.3};    
\draw (0-.1,3.1316)--(0,3.1316);    
\draw (0-.2,3.3158)--(0,3.3158) node[left=6pt]{\small 1.4};    
\draw (0-.1,3.5)--(0,3.5);    
\draw (0-.2,3.6842)--(0,3.6842) node[left=6pt]{\small 1.5};    
\draw (0-.1,3.8684)--(0,3.8684);    
\draw (0-.2,4.0526)--(0,4.0526) node[left=6pt]{\small 1.6};    
\draw (0-.1,4.2368)--(0,4.2368);    
\draw (0-.2,4.4211)--(0,4.4211) node[left=6pt]{\small 1.7};    
\draw (0-.1,4.6053)--(0,4.6053);    
\draw (0-.2,4.7895)--(0,4.7895) node[left=6pt]{\small 1.8};    
\draw (0-.1,4.9737)--(0,4.9737);    
\draw (0-.2,5.1579)--(0,5.1579) node[left=6pt]{\small 1.9};    
\draw (0-.1,5.3421)--(0,5.3421);    
\draw (0-.2,5.5263)--(0,5.5263) node[left=6pt]{\small 2};   
\draw (0-.1,5.7105)--(0,5.7105);    
\draw (0-.2,5.8947)--(0,5.8947) node[left=6pt]{\small 2.1};    
\draw (0-.1,6.0789)--(0,6.0789);    
\draw (0-.2,6.2632)--(0,6.2632) node[left=6pt]{\small 2.2};    
\draw (0-.1,6.4474)--(0,6.4474);    
\draw (0-.2,6.6316)--(0,6.6316) node[left=6pt]{\small 2.3};    
\draw (0-.1,6.8158)--(0,6.8158);    
\draw (0-.2,7)--(0,7) node[left=6pt]{\small 2.4};     
% Vorgaben (t_i,f_i)       
\color{black}     
\draw[fill,black] (0,0.53053) circle (1.5pt);   
\draw[fill,black] (0.3125,0.44947) circle (1.5pt);    
\draw[fill,black] (0.625,0.50842) circle (1.5pt);     
\draw[fill,black] (0.9375,0.54895) circle (1.5pt);    
\draw[fill,black] (1.25,0.56) circle (1.5pt);   
\draw[fill,black] (1.5625,0.51211) circle (1.5pt);    
\draw[fill,black] (1.875,0.53789) circle (1.5pt);     
\draw[fill,black] (2.1875,0.57842) circle (1.5pt);    
\draw[fill,black] (2.5,0.56) circle (1.5pt);    
\draw[fill,black] (2.8125,0.57105) circle (1.5pt);    
\draw[fill,black] (3.125,0.53053) circle (1.5pt);     
\draw[fill,black] (3.4375,0.60053) circle (1.5pt);    
\draw[fill,black] (3.75,0.60053) circle (1.5pt);   
\draw[fill,black] (4.0625,0.61895) circle (1.5pt);    
\draw[fill,black] (4.375,0.64842) circle (1.5pt);     
\draw[fill,black] (4.6875,0.64842) circle (1.5pt);    
\draw[fill,black] (5,0.68526) circle (1.5pt);   
\draw[fill,black] (5.3125,0.65947) circle (1.5pt);    
\draw[fill,black] (5.625,0.65579) circle (1.5pt);     
\draw[fill,black] (5.9375,0.67421) circle (1.5pt);    
\draw[fill,black] (6.25,0.71474) circle (1.5pt);   
\draw[fill,black] (6.5625,0.73316) circle (1.5pt);    
\draw[fill,black] (6.875,0.77368) circle (1.5pt);     
\draw[fill,black] (7.1875,0.84737) circle (1.5pt);    
\draw[fill,black] (7.5,0.96895) circle (1.5pt);    
\draw[fill,black] (7.8125,1.1495) circle (1.5pt);     
\draw[fill,black] (8.125,1.4995) circle (1.5pt);   
\draw[fill,black] (8.4375,2.0042) circle (1.5pt);     
\draw[fill,black] (8.75,3.08) circle (1.5pt);   
\draw[fill,black] (9.0625,5.0879) circle (1.5pt);     
\draw[fill,black] (9.375,6.1489) circle (1.5pt);   
\draw[fill,black] (9.6875,5.8026) circle (1.5pt);     
\draw[fill,black] (10,4.0453) circle (1.5pt);   
\draw[fill,black] (10.3125,2.6195) circle (1.5pt);    
\draw[fill,black] (10.625,1.5326) circle (1.5pt);     
\draw[fill,black] (10.9375,0.90632) circle (1.5pt);   
\draw[fill,black] (11.25,0.63368) circle (1.5pt);     
\draw[fill,black] (11.5625,0.46789) circle (1.5pt);   
\draw[fill,black] (11.875,0.42368) circle (1.5pt);    
\draw[fill,black] (12.1875,0.39421) circle (1.5pt);   
\draw[fill,black] (12.5,0.39053) circle (1.5pt);   
\draw[fill,black] (12.8125,0.40158) circle (1.5pt);   
\draw[fill,black] (13.125,0.37947) circle (1.5pt);    
\draw[fill,black] (13.4375,0.37211) circle (1.5pt);   
\draw[fill,black] (13.75,0.37947) circle (1.5pt);     
\draw[fill,black] (14.0625,0.37211) circle (1.5pt);   
\draw[fill,black] (14.375,0.40895) circle (1.5pt);    
\draw[fill,black] (14.6875,0.37211) circle (1.5pt);   
\draw[fill,black] (15,0.39789) circle (1.5pt);     
% Polygonzüge     
% Polygonzug 1       
\draw[thick,dashed,black] (0,0.39457)--(8.234,0.97352)--(9.4635,6.6557)--(10.7904,0.61132)--(15,0.29958);
      
% Polygonzug 2       
\draw[thick,black] (0,0.47282)--(7.3887,0.7641)--(8.4955,1.988)--(9.4782,6.8134)--(10.4879,1.6709);
\draw[thick,black] (10.4879,1.6709)--(11.3544,0.42463)--(15,0.37173);  

% Legende
\draw[fill,black] (12.9,5.4) circle (1.2pt);  
\draw (13.3,5.4) node[right=2pt] {\small $\big(x_i,f_i\big)$}; 
\draw[thick,dashed] (12.5,5.0)--(13.3,5.0) node[right=2pt] {\small $k=3$}; 
\draw[thick] (12.5,4.6)--(13.3,4.6) node[right=2pt] {\small $k=5$};  
\draw[very thin] (12.3,4.3)--(14.8,4.3)--(14.8,5.8)--(12.3,5.8)--(12.3,4.3);
  
\draw (0,-1) node[right=0pt]{\makebox[15cm][c]{\small Temperature}}; 
\draw (-1.2,0) node[above=0pt]{\rotatebox{90}{\makebox[6cm][c]{\small Titanium Heat Data}}};
\end{tikzpicture}
} % ENDE \resizebox
%} % END \fbox
\end{center}
\caption{Best approximation of the Titanium Heat Data by first-degree splines with $k=3$ bzw. $k=5$ free knots respectively.}
\label{fig4}
\end{figure}
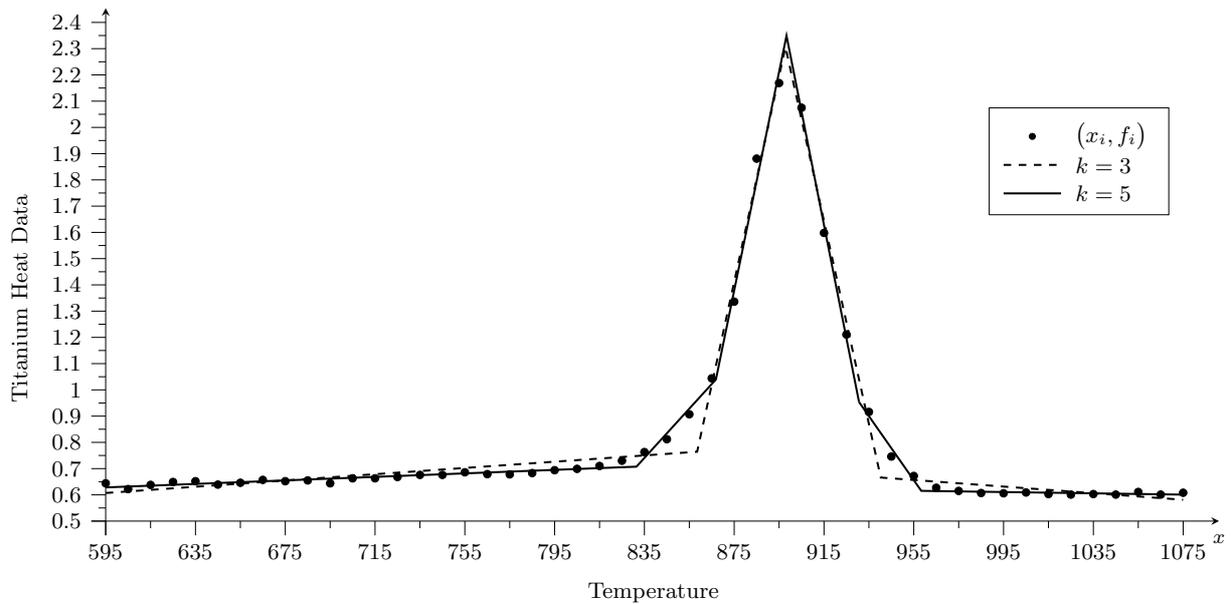

%%%%%%%%%%%%%%%%%%%%%%%%%%%%%%
% Example 4.3
%%%%%%%%%%%%%%%%%%%%%%%%%%%%%%
\vspace{0.2cm}
\begin{exmp}\label{allgBsp3}
We look at the equidistant data abscissae $x_i = -1 + \frac{i}{10}$, $i=0,1,\ldots,20$ and the convex values $f_i := x_i^2$, $i=0,1,\ldots,20$. Approximation of the data $(x_i,f_i)$ by a first-order spline with $k=5$ free knots preserves the convexity of the data as can be seen in Fig. \ref{fig5}. 
\end{exmp}

% ==================
% Figure 5
% ==================
\begin{figure}[h]
\begin{center}
%\fbox{
\resizebox{0.75\textwidth}{!}{
\begin{tikzpicture} 
% x-Achse und y-Achse  
\draw[->,color=black,>=stealth] (-0.2,0) -- (12.5,0) node[below=2pt] {\footnotesize $x$};
\draw[->,color=black,>=stealth] (0,-0.2) -- (0,7.2); 
% Einteilung und Beschriftung der x-Achse
\draw (0,0-.2)--(0,0) node[below=6pt]{\small -1}; 
\draw (0.6,0-.1)--(0.6,0);
\draw (1.2,0-.2)--(1.2,0) node[below=6pt]{\small -0.8}; 
\draw (1.8,0-.1)--(1.8,0);
\draw (2.4,0-.2)--(2.4,0) node[below=6pt]{\small -0.6}; 
\draw (3,0-.1)--(3,0); 
\draw (3.6,0-.2)--(3.6,0) node[below=6pt]{\small -0.4}; 
\draw (4.2,0-.1)--(4.2,0);
\draw (4.8,0-.2)--(4.8,0) node[below=6pt]{\small -0.2}; 
\draw (5.4,0-.1)--(5.4,0);
\draw (6,0-.2)--(6,0) node[below=6pt]{\small 0};  
\draw (6.6,0-.1)--(6.6,0);
\draw (7.2,0-.2)--(7.2,0) node[below=6pt]{\small 0.2};  
\draw (7.8,0-.1)--(7.8,0);
\draw (8.4,0-.2)--(8.4,0) node[below=6pt]{\small 0.4};  
\draw (9,0-.1)--(9,0); 
\draw (9.6,0-.2)--(9.6,0) node[below=6pt]{\small 0.6};  
\draw (10.2,0-.1)--(10.2,0); 
\draw (10.8,0-.2)--(10.8,0) node[below=6pt]{\small 0.8};
\draw (11.4,0-.1)--(11.4,0); 
\draw (12,0-.2)--(12,0) node[below=6pt]{\small 1};
% Einteilung und Beschriftung der y-Achse
\draw (0-.2,0)--(0,0) node[left=6pt]{\small -0.1};
\draw (0-.1,0.31818)--(0,0.31818); 
\draw (0-.2,0.63636)--(0,0.63636) node[left=6pt]{\small 0};
\draw (0-.1,0.95455)--(0,0.95455); 
\draw (0-.2,1.2727)--(0,1.2727) node[left=6pt]{\small 0.1};
\draw (0-.1,1.5909)--(0,1.5909);
\draw (0-.2,1.9091)--(0,1.9091) node[left=6pt]{\small 0.2};
\draw (0-.1,2.2273)--(0,2.2273);
\draw (0-.2,2.5455)--(0,2.5455) node[left=6pt]{\small 0.3};
\draw (0-.1,2.8636)--(0,2.8636);
\draw (0-.2,3.1818)--(0,3.1818) node[left=6pt]{\small 0.4};
\draw (0-.1,3.5)--(0,3.5);
\draw (0-.2,3.8182)--(0,3.8182) node[left=6pt]{\small 0.5};
\draw (0-.1,4.1364)--(0,4.1364);
\draw (0-.2,4.4545)--(0,4.4545) node[left=6pt]{\small 0.6};
\draw (0-.1,4.7727)--(0,4.7727);
\draw (0-.2,5.0909)--(0,5.0909) node[left=6pt]{\small 0.7};
\draw (0-.1,5.4091)--(0,5.4091);
\draw (0-.2,5.7273)--(0,5.7273) node[left=6pt]{\small 0.8};
\draw (0-.1,6.0455)--(0,6.0455);
\draw (0-.2,6.3636)--(0,6.3636) node[left=6pt]{\small 0.9};
\draw (0-.1,6.6818)--(0,6.6818);
\draw (0-.2,7)--(0,7) node[left=6pt]{\small 1};
% Vorgaben (t_i,f_i)
\color{black} 
\draw[fill,black] (0,7) circle (2pt); 
\draw[fill,black] (0.6,5.7909) circle (2pt);
\draw[fill,black] (1.2,4.7091) circle (2pt);
\draw[fill,black] (1.8,3.7545) circle (2pt);
\draw[fill,black] (2.4,2.9273) circle (2pt);
\draw[fill,black] (3,2.2273) circle (2pt);  
\draw[fill,black] (3.6,1.6545) circle (2pt);
\draw[fill,black] (4.2,1.2091) circle (2pt);
\draw[fill,black] (4.8,0.89091) circle (2pt);  
\draw[fill,black] (5.4,0.7) circle (2pt);
\draw[fill,black] (6,0.63636) circle (2pt); 
\draw[fill,black] (6.6,0.7) circle (2pt);
\draw[fill,black] (7.2,0.89091) circle (2pt);  
\draw[fill,black] (7.8,1.2091) circle (2pt);
\draw[fill,black] (8.4,1.6545) circle (2pt);
\draw[fill,black] (9,2.2273) circle (2pt);  
\draw[fill,black] (9.6,2.9273) circle (2pt);
\draw[fill,black] (10.2,3.7545) circle (2pt);  
\draw[fill,black] (10.8,4.7091) circle (2pt);  
\draw[fill,black] (11.4,5.7909) circle (2pt);  
\draw[fill,black] (12,7) circle (2pt);
% Polygonzüge 
% Polygonzug 1
\draw[thick,black] (0,6.9788)--(1.6,3.9242)--(3.9,1.2409)--(6.3,0.47727)--(8.6,1.697);
\draw[thick,black] (8.6,1.697)--(10.5,4.1152)--(12,6.9788);
  
\end{tikzpicture} 
} % ENDE \resizebox
%} % END \fbox
\end{center}
\caption{The best approximation to convex data is convex again in Example \ref{allgBsp3}.}
\label{fig5}
\end{figure}
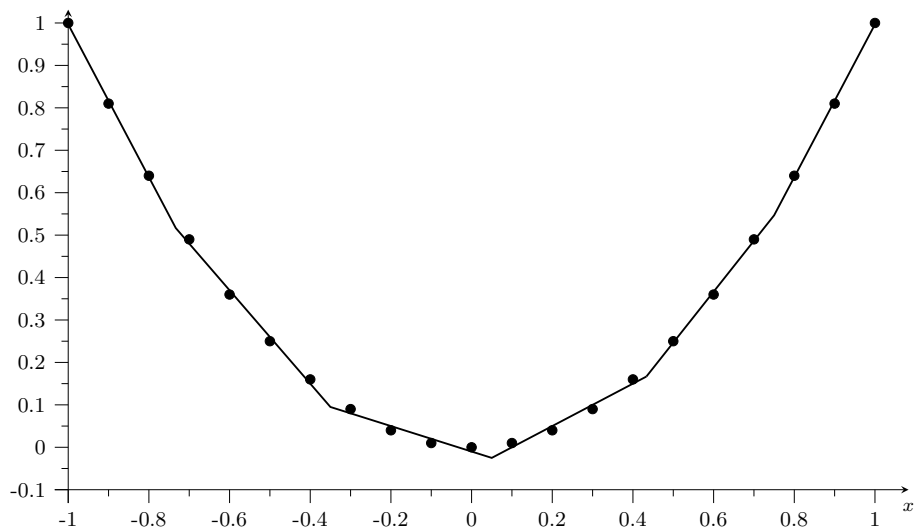

%%%%%%%%%%%%%%%%%%%%%%%%%%%%%%%%
% Example 4.4
%%%%%%%%%%%%%%%%%%%%%%%%%%%%%%%%
\vspace{0.2cm}

\vspace{0.2in}
\begin{exmp}\label{allgBsp4}
For $k=2,3$ the data $(x_i,f_i)$ from Fig. \ref{fig6} were approximated by broken lines from $S^1_k\left[x_0,x_{71}\right]$. The values $f_i$ are the aritmetic mean of measurements of forces measured in approximately equidistant time intervals when a sub-way switch ist set.  

\vspace{0.05in}
For $k=2$ a best approximation $s_2$ was computed with the (interior) knots $t_1 = 0.77277$ and $t_2 = 1.83478$ and approximation error $\left\|F-s_2(X)\right\|_2 = 686.36122$. Raising the number of free knots to $k=3$ lowers the approximation error to $\left\|F-s_3(X)\right\|_2 = 460.5584$ where the computed best approximation $s_3$ has the three (interior) knots $t_1 = 0.25851, t_2 = 0.87598$ and $t_3 = 1.81896$.
\end{exmp} 

% =========================
% Figure 6
% =========================
\begin{figure}[h]
\begin{center}
%\fbox{
\scalebox{0.75}{
\begin{picture}(415,240)
\color{black}
%x-Achseundy-Achse
\put(35,35){\vector(1,0){375}}
\put(40,30){\vector(0,1){203}}
%Einteilung und Beschriftung der x-Achse
\put(37,19){\small $0$}
\put(50,33.5){\line(0,1){3}}
\put(60,33.5){\line(0,1){3}}
\put(70,33.5){\line(0,1){3}}
\put(80,33.5){\line(0,1){3}}
\put(90,32){\line(0,1){6}}
\put(84,19){\small $0.5$}
\put(100,33.5){\line(0,1){3}}
\put(110,33.5){\line(0,1){3}}
\put(120,33.5){\line(0,1){3}}
\put(130,33.5){\line(0,1){3}}
\put(140,30){\line(0,1){10}}
\put(137,19){\small $1$}
\put(150,33.5){\line(0,1){3}}
\put(160,33.5){\line(0,1){3}}
\put(170,33.5){\line(0,1){3}}
\put(180,33.5){\line(0,1){3}}
\put(190,32){\line(0,1){6}}
\put(184,19){\small $1.5$}
\put(200,33.5){\line(0,1){3}}
\put(210,33.5){\line(0,1){3}}
\put(220,33.5){\line(0,1){3}}
\put(230,33.5){\line(0,1){3}}
\put(240,30){\line(0,1){10}}
\put(237,19){\small $2$}
\put(250,33.5){\line(0,1){3}}
\put(260,33.5){\line(0,1){3}}
\put(270,33.5){\line(0,1){3}}
\put(280,33.5){\line(0,1){3}}
\put(290,32){\line(0,1){6}}
\put(284,19){\small $2.5$}
\put(300,33.5){\line(0,1){3}}
\put(310,33.5){\line(0,1){3}}
\put(320,33.5){\line(0,1){3}}
\put(330,33.5){\line(0,1){3}}
\put(340,30){\line(0,1){10}}
\put(337,19){\small $3$}
\put(350,33.5){\line(0,1){3}}
\put(360,33.5){\line(0,1){3}}
\put(370,33.5){\line(0,1){3}}
\put(380,33.5){\line(0,1){3}}
\put(390,32){\line(0,1){6}}
\put(384,19){\small $3.5$}
\put(400,33.5){\line(0,1){3}}
\put(407,19){\small $t$}
%Einteilung und Beschriftung der y-Achse
\put(27,33){\footnotesize $0$}
\put(36,50.2928){\line(1,0){8}}
\put(38,42.6464){\line(1,0){4}}
\put(19,48.2928){\footnotesize $144$}
\put(36,65.5855){\line(1,0){8}}
\put(38,57.9392){\line(1,0){4}}
\put(19,63.5855){\footnotesize $288$}
\put(36,80.8783){\line(1,0){8}}
\put(38,73.2319){\line(1,0){4}}
\put(19,78.8783){\footnotesize $432$}
\put(36,96.1711){\line(1,0){8}}
\put(38,88.5247){\line(1,0){4}}
\put(19,94.1711){\footnotesize $576$}
\put(36,111.4638){\line(1,0){8}}
\put(38,103.8175){\line(1,0){4}}
\put(19,109.4638){\footnotesize $720$}
\put(36,126.7566){\line(1,0){8}}
\put(38,119.1102){\line(1,0){4}}
\put(19,124.7566){\footnotesize $864$}
\put(36,142.0494){\line(1,0){8}}
\put(38,134.403){\line(1,0){4}}
\put(15,140.0494){\footnotesize $1008$}
\put(36,157.3421){\line(1,0){8}}
\put(38,149.6958){\line(1,0){4}}
\put(15,155.3421){\footnotesize $1152$}
\put(36,172.6349){\line(1,0){8}}
\put(38,164.9885){\line(1,0){4}}
\put(15,170.6349){\footnotesize $1296$}
\put(36,187.9277){\line(1,0){8}}
\put(38,180.2813){\line(1,0){4}}
\put(15,185.9277){\footnotesize $1440$}
\put(36,203.2204){\line(1,0){8}}
\put(38,195.5741){\line(1,0){4}}
\put(15,201.2204){\footnotesize $1584$}
\put(36,218.5132){\line(1,0){8}}
\put(38,210.8668){\line(1,0){4}}
\put(36,220){\line(1,0){8}}
\put(15,218){\footnotesize $1742$}
%Vorgaben(t_i,f_i)
\color{black}
\put(40,88.3573){\circle*{4}}
\put(45,86.4627){\circle*{4}}
\put(50,101.3533){\circle*{4}}
\put(55,138.2756){\circle*{4}}
\put(60,148.7197){\circle*{4}}
\put(65,160.5468){\circle*{4}}
\put(70,172.5405){\circle*{4}}
\put(75,179.181){\circle*{4}}
\put(80,175.859){\circle*{4}}
\put(85,177.1325){\circle*{4}}
\put(90,169.9366){\circle*{4}}
\put(95,172.646){\circle*{4}}
\put(100,184.5266){\circle*{4}}
\put(105,198.3454){\circle*{4}}
\put(110,209.0049){\circle*{4}}
\put(115,206.519){\circle*{4}}
\put(120,199.3861){\circle*{4}}
\put(125,204.573){\circle*{4}}
\put(130,205.5291){\circle*{4}}
\put(135,195.8562){\circle*{4}}
\put(140,174.8539){\circle*{4}}
\put(145,171.2178){\circle*{4}}
\put(150,161.4353){\circle*{4}}
\put(155,167.7749){\circle*{4}}
\put(160,169.1389){\circle*{4}}
\put(165,162.5369){\circle*{4}}
\put(170,153.2783){\circle*{4}}
\put(175,138.2067){\circle*{4}}
\put(180,124.3857){\circle*{4}}
\put(185,122.3476){\circle*{4}}
\put(190,108.1598){\circle*{4}}
\put(195,86.5157){\circle*{4}}
\put(200,85.8984){\circle*{4}}
\put(205,82.3306){\circle*{4}}
\put(210,76.1772){\circle*{4}}
\put(215,64.5811){\circle*{4}}
\put(220,56.3554){\circle*{4}}
\put(225,53.0719){\circle*{4}}
\put(230,54.2996){\circle*{4}}
\put(235,56.903){\circle*{4}}
\put(240,55.8285){\circle*{4}}
\put(245,54.6657){\circle*{4}}
\put(250,54.2186){\circle*{4}}
\put(255,54.922){\circle*{4}}
\put(260,57.8817){\circle*{4}}
\put(265,59.034){\circle*{4}}
\put(270,59.2124){\circle*{4}}
\put(275,59.3007){\circle*{4}}
\put(280,59.2954){\circle*{4}}
\put(285,59.2941){\circle*{4}}
\put(290,59.3116){\circle*{4}}
\put(295,59.3678){\circle*{4}}
\put(300,59.3905){\circle*{4}}
\put(305,59.5098){\circle*{4}}
\put(310,59.6261){\circle*{4}}
\put(315,59.6048){\circle*{4}}
\put(320,59.5709){\circle*{4}}
\put(325,59.5629){\circle*{4}}
\put(330,59.5885){\circle*{4}}
\put(335,59.563){\circle*{4}}
\put(340,59.5935){\circle*{4}}
\put(345,59.5499){\circle*{4}}
\put(350,59.5904){\circle*{4}}
\put(355,59.6194){\circle*{4}}
\put(360,59.5986){\circle*{4}}
\put(365,59.6113){\circle*{4}}
\put(370,59.6466){\circle*{4}}
\put(375,59.5776){\circle*{4}}
\put(380,59.5952){\circle*{4}}
\put(385,59.592){\circle*{4}}
\put(390,59.6352){\circle*{4}}
\put(395,59.6112){\circle*{4}}
\thicklines
\drawline[-35](40,104.6299)(117.277,219.9438)(223.4785,56.0848)(395,60.9149)
\thinlines
\dottedline[\circle*{1}](40,78.8592)(65.8507,165.2214)(127.5977,208.6296)(221.8959,56.0402)(395,60.9149)
\color{black}
\put(185,4){\small Time (seconds)}
\put(2,87){\rotatebox{90}{\small Force (Newton)}}
%Legende
\put(340,220){\circle*{4}}
\thicklines
\drawline[-30](320,205)(360,205)
\thinlines
\dottedline[\circle*{1}](320,190)(360,190)
\color{black}
\put(368,217){\small $\left(x_i,f_i\right)$}
\put(371,202){\small $k=2$}
\put(371,187){\small $k=3$}
%\drawline(322,234)(322,180)(419,180)
\end{picture}
} % ENDE \scalebox
%} % END \fbox
\end{center}
\caption{Best approximation from Example \ref{allgBsp4}.}
\label{fig6}
\end{figure}
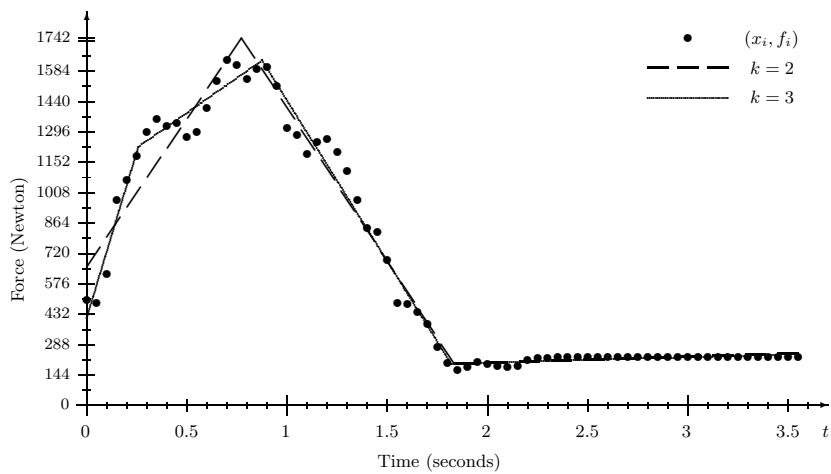

%%%%%%%%%%%%%%%%%%%%%%%%%%%%%%%%%
% Example 4.5
%%%%%%%%%%%%%%%%%%%%%%%%%%%%%%%%%
\vspace{0.2cm}
\begin{exmp}\label{allgBsp5}
Fig. \ref{fig7} shows the best approximations from $S_7^1[x_0,x_{19}]$ and from $S_{10}^1[x_0,x_{19}]$ to $\mu+2=20$ data points $(x_i,f_i)$ with $x_i := i+1$, $i=0,1,2,\ldots,20$. In computing the best approximation in $S_7^1[x_0,x_{19}]$ Algorithm \ref{eq:basisalgorithmus} inspected $6\,724\,520$ position vectors with $1 \leq p(1) < p(2) < \ldots < p(10) \leq 2\mu-1$, only  $795\,455$ of which are regular. For the approximation from $S_{10}^1[x_0,x_{19}]$ the algorithm inspected $183\,579\,396$ position vectors with $1 \leq p(1) < p(2) < \ldots < p(10) \leq 2\mu-1$, of which $1\,256\,465$ were regular.
\end{exmp}

% ====================
% Figure 7
% ====================
\begin{figure}[h]
\begin{center}
%\fbox{
\resizebox{0.75\textwidth}{!}{
\begin{tikzpicture}
% x-Achse und y-Achse  
\draw[->,color=black,>=stealth] (-0.2,0) -- (12.5,0) node[below=2pt] {\footnotesize $x$};  
\draw[->,color=black,>=stealth] (0,-0.2) -- (0,7.2);   
% Einteilung und Beschriftung der x-Achse  
\draw (0,0-.2)--(0,0) node[below=6pt]{\small 1};   
\draw (0.31579,0-.1)--(0.31579,0); 
\draw (0.63158,0-.2)--(0.63158,0) node[below=6pt]{\small 2};   
\draw (0.94737,0-.1)--(0.94737,0); 
\draw (1.2632,0-.2)--(1.2632,0) node[below=6pt]{\small 3}; 
\draw (1.5789,0-.1)--(1.5789,0);   
\draw (1.8947,0-.2)--(1.8947,0) node[below=6pt]{\small 4}; 
\draw (2.2105,0-.1)--(2.2105,0);   
\draw (2.5263,0-.2)--(2.5263,0) node[below=6pt]{\small 5}; 
\draw (2.8421,0-.1)--(2.8421,0);   
\draw (3.1579,0-.2)--(3.1579,0) node[below=6pt]{\small 6}; 
\draw (3.4737,0-.1)--(3.4737,0);   
\draw (3.7895,0-.2)--(3.7895,0) node[below=6pt]{\small 7}; 
\draw (4.1053,0-.1)--(4.1053,0);   
\draw (4.4211,0-.2)--(4.4211,0) node[below=6pt]{\small 8}; 
\draw (4.7368,0-.1)--(4.7368,0);   
\draw (5.0526,0-.2)--(5.0526,0) node[below=6pt]{\small 9}; 
\draw (5.3684,0-.1)--(5.3684,0);   
\draw (5.6842,0-.2)--(5.6842,0) node[below=6pt]{\small 10};
\draw (6,0-.1)--(6,0); 
\draw (6.3158,0-.2)--(6.3158,0) node[below=6pt]{\small 11};
\draw (6.6316,0-.1)--(6.6316,0);   
\draw (6.9474,0-.2)--(6.9474,0) node[below=6pt]{\small 12};
\draw (7.2632,0-.1)--(7.2632,0);   
\draw (7.5789,0-.2)--(7.5789,0) node[below=6pt]{\small 13};
\draw (7.8947,0-.1)--(7.8947,0);   
\draw (8.2105,0-.2)--(8.2105,0) node[below=6pt]{\small 14};
\draw (8.5263,0-.1)--(8.5263,0);   
\draw (8.8421,0-.2)--(8.8421,0) node[below=6pt]{\small 15};
\draw (9.1579,0-.1)--(9.1579,0);   
\draw (9.4737,0-.2)--(9.4737,0) node[below=6pt]{\small 16};
\draw (9.7895,0-.1)--(9.7895,0);   
\draw (10.1053,0-.2)--(10.1053,0) node[below=6pt]{\small 17};  
\draw (10.4211,0-.1)--(10.4211,0); 
\draw (10.7368,0-.2)--(10.7368,0) node[below=6pt]{\small 18};  
\draw (11.0526,0-.1)--(11.0526,0); 
\draw (11.3684,0-.2)--(11.3684,0) node[below=6pt]{\small 19};  
\draw (11.6842,0-.1)--(11.6842,0); 
\draw (12,0-.2)--(12,0) node[below=6pt]{\small 20};
% Einteilung und Beschriftung der y-Achse  
\draw (0-.2,0)--(0,0) node[left=6pt]{\small 0};
\draw (0-.1,0.26923)--(0,0.26923); 
\draw (0-.2,0.53846)--(0,0.53846) node[left=6pt]{\small 0.1};  
\draw (0-.1,0.80769)--(0,0.80769); 
\draw (0-.2,1.0769)--(0,1.0769) node[left=6pt]{\small 0.2};
\draw (0-.1,1.3462)--(0,1.3462);   
\draw (0-.2,1.6154)--(0,1.6154) node[left=6pt]{\small 0.3};
\draw (0-.1,1.8846)--(0,1.8846);   
\draw (0-.2,2.1538)--(0,2.1538) node[left=6pt]{\small 0.4};
\draw (0-.1,2.4231)--(0,2.4231);   
\draw (0-.2,2.6923)--(0,2.6923) node[left=6pt]{\small 0.5};
\draw (0-.1,2.9615)--(0,2.9615);   
\draw (0-.2,3.2308)--(0,3.2308) node[left=6pt]{\small 0.6};
\draw (0-.1,3.5)--(0,3.5); 
\draw (0-.2,3.7692)--(0,3.7692) node[left=6pt]{\small 0.7};
\draw (0-.1,4.0385)--(0,4.0385);   
\draw (0-.2,4.3077)--(0,4.3077) node[left=6pt]{\small 0.8};
\draw (0-.1,4.5769)--(0,4.5769);   
\draw (0-.2,4.8462)--(0,4.8462) node[left=6pt]{\small 0.9};
\draw (0-.1,5.1154)--(0,5.1154);   
\draw (0-.2,5.3846)--(0,5.3846) node[left=6pt]{\small 1};  
\draw (0-.1,5.6538)--(0,5.6538);   
\draw (0-.2,5.9231)--(0,5.9231) node[left=6pt]{\small 1.1};
\draw (0-.1,6.1923)--(0,6.1923);   
\draw (0-.2,6.4615)--(0,6.4615) node[left=6pt]{\small 1.2};
\draw (0-.1,6.7308)--(0,6.7308);   
\draw (0-.2,7)--(0,7) node[left=6pt]{\small 1.3};  
% Vorgaben (t_i,f_i)   
\color{black}  
\draw[fill,black] (0,4.5932) circle (2pt); 
\draw[fill,black] (0.63158,3.3495) circle (2pt);   
\draw[fill,black] (1.2632,1.8897) circle (2pt);
\draw[fill,black] (1.8947,2.7637) circle (2pt);
\draw[fill,black] (2.5263,2.1636) circle (2pt);
\draw[fill,black] (3.1579,0.40905) circle (2pt);   
\draw[fill,black] (3.7895,1.2919) circle (2pt);
\draw[fill,black] (4.4211,0.66403) circle (2pt);   
\draw[fill,black] (5.0526,0.99027) circle (2pt);   
\draw[fill,black] (5.6842,1.2921) circle (2pt);
\draw[fill,black] (6.3158,2.2468) circle (2pt);
\draw[fill,black] (6.9474,0.26737) circle (2pt);   
\draw[fill,black] (7.5789,4.8608) circle (2pt);
\draw[fill,black] (8.2105,5.0873) circle (2pt);
\draw[fill,black] (8.8421,2.6431) circle (2pt);
\draw[fill,black] (9.4737,2.6344) circle (2pt);
\draw[fill,black] (10.1053,1.8185) circle (2pt);   
\draw[fill,black] (10.7368,4.8464) circle (2pt);   
\draw[fill,black] (11.3684,1.9883) circle (2pt);   
\draw[fill,black] (12,0.59878) circle (2pt);   
% Polygonzüge  
% Polygonzug 1 
\draw[thick,black] (0,4.2142)--(3.3664,0.61941)--(6.3158,1.8046)--(6.9474,0.26737)--(7.811,6.5484)--(8.8421,2.7777)--(10.1053,1.953)--(10.7368,4.6017)--(12,0.354); 
\draw[dashed,thick,black] (0,4.6293)--(1.2632,1.9258)--(2.1365,3.0844)--(3.1579,0.67143)--(5.5537,1.0947)--(6.3158,2.2468)--(6.9474,0.26737)--(7.811,6.5484)--(8.8421,2.7777)--(10.1053,1.953)--(10.7368,4.6017)--(12,0.354);

% Legende
\draw[fill,black] (3.9,6.4) circle (2pt);  
\draw (4.3,6.4) node[right=2pt] {\small $\big(x_i,f_i\big)$}; 
\draw[thick] (3.5,6.0)--(4.3,6.0) node[right=2pt] {\small $s_7$}; 
\draw[thick,dashed] (3.5,5.6)--(4.3,5.6) node[right=2pt] {\small $s_{10}$};  
\draw[very thin] (3.3,5.3)--(5.8,5.3)--(5.8,6.8)--(3.3,6.8)--(3.3,5.3);
   
\end{tikzpicture}
} % ENDE \resizebox
%} % END \fbox
\end{center}
\caption{Best approximations $s_7\in S_7^1[1,20]$ and $s_{10}\in S_{10}^1[1,20]$ to the $20$ data points $(x_i,f_i)$ from Example \ref{allgBsp5}.}
\label{fig7}
\end{figure}
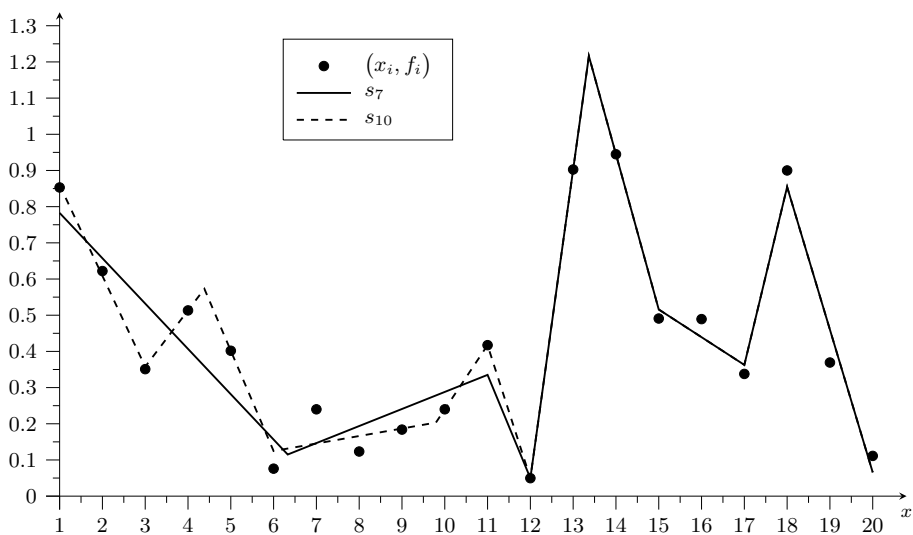

%%%%%%%%%%%%%%%%%%%%%%%%%%%%%%%%%%%%%%%%%%
% Introduction for the medical application
%%%%%%%%%%%%%%%%%%%%%%%%%%%%%%%%%%%%%%%%%%
\vspace{0.2cm}
Next, we present a medical application which originally motivated the development of Algorithm \ref{eq:basisalgorithmus}. The medical objective is to determine with an automated procedure the lowest concentration of an antibiotic necessary to kill (a defined high proportion of) the targeted bacteria, the so-called \textit{\underline{m}inimal \underline{b}actericidal \underline{c}oncentration} (MBC). In clinical applications this helps to fight germs effectictely whith minimal unwanted side effects. 

\vspace{0.05in}
The measurements are determined as follows: An inital concentration $\kappa_0$ of the antibiotic is halved in each step. Germs are then exposed to each dilution $\kappa(j) := \kappa_0 \cdot 2^{-j}$, $j=0,\ldots,z$,  $z \in \mathbb{N}\setminus\{\,0\,\}$. The effect of concentration $\kappa(j)$ on the viability of the germs ist measured by a flourescence method which allows to derive the values
\begin{equation*}
\varphi_j := \frac{\text{number of surviving germs after exposition}}{\text{total number of germs before exposition}}
\end{equation*}
for $j=0,\ldots,z$. 

\vspace{0.05in}
This gives us $z+1$ data points $\left(\kappa(0),\varphi_0\right), \ldots, \left(\kappa(z),\varphi_{z}\right)$. For better representation instead of concentration $\kappa(j)$ the logarithmic values $\log\left(\kappa(j)\right)$ are used, that ist, we make use of the relation $j \;=\; -\frac{\log(\kappa(j)) - \log(k_0)}{\log(2)}$ and plot the pairs 
\begin{equation*}
\left(0,\varphi_0\right) , \left(1,\varphi_1\right) , \ldots \;,\; \left(z,\varphi_{z}\right). 
\end{equation*}
See Fig. \ref{fig8} for a typical graph of the data and a best approximating first-degree spline $s^{\ast} \in S^1_2\left[x_0,x_z\right]$ with knots $t_1$ und $t_2$. The knot $t_1$ is interpreted from the medical perspective as  \textit{minimal bactericidal concentration}
\begin{equation*}
\text{MBC} \;:=\; \kappa\left(t_1\right) \;=\; \kappa_0 \cdot 2^{-t_1} 
\end{equation*}
Also of medical interest is knot $t_2$ from which
\begin{equation*}
\text{MIC} \;:=\; \kappa\left(t_2\right) \;=\; \kappa_0 \cdot 2^{-t_2},
\end{equation*}
is derived, the \textit{\underline{m}inimal \underline{i}nhibitory \underline{c}oncentration}, the lowest concentration with noticeable bactericidal effect; see \cite{baerSchumannKrebsCromme} for more details.

\vspace{0.1in}
In the following examples the initial concentration was $\kappa_0 := 256 = 2^8$, when $z=19$ and $\kappa_0 := 128 = 2^7$, when $z=18$. The following numerical example shows typical data and a typical best approximation  $s^{\ast} \in S^1_2\left[x_0,x_z\right]$ with monotonically increasing $s^{\ast}$.

%%%%%%%%%%%%%%%%%%%%%%%%%%%
% Example 4.6
%%%%%%%%%%%%%%%%%%%%%%%%%%%

\begin{exmp}\label{medBsp1}
With $x_i := i$ for $i=0,1,\ldots,19$ the following medical values $f_i$ must be approximated:
\begin{flushleft}
\resizebox{1.0\textwidth}{!}{
$\begin{array}{r|rrrrrrrrrrr}                                         
i & 0 & 1 & 2 & 3 & 4 & 5 & 6 & 7 & 8 & 9 & 10 \\                               
\hline                                                                                        
f_i & 3.6273 & 3.381 & 3.0339 & 2.8414 & 2.7507 & 2.9006 & 2.941 & 2.9986 & 3.2127 & 3.8381 & 8.2629
\end{array}$
} % ENDE \resizebox 
\end{flushleft}                                                                                                                 
\begin{flushleft}
\resizebox{0.95\textwidth}{!}{
$\begin{array}{r|rrrrrrrrr}                                         
i &  11 & 12 & 13 & 14 & 15 & 16 & 17 & 18 & 19 \\                         
\hline                                                                                 
f_i  & 37.7363 & 84.0146 & 94.7914 & 98.7679 & 97.0424 & 98.0432 & 95.5602 & 99.0313 & 100   
\end{array}$
} % ENDE \resizebox
\end{flushleft}

\vspace{0.1in}
\noindent{}As best approximation from  $S^1_2\left[x_0,x_{19}\right]$ Algorithm \ref{eq:basisalgorithmus} computed the broken line
\begin{equation*}                                                
s^{\ast}(x) =                                                    
\left\{                                                                                                    
\begin{array}{lcl}                                                                                         
0.2368 \, x + 2.43313 & , & x < 10.28981 \\                                                                   
46.2783 \, x - 471.325 & , & 10.28981\leq x < 12.25123 \\                                                     
0.52394 \, x + 89.22211 & , & x \geq 12.25123                                                             
\end{array}                                                                                                
\right.                                                          
\end{equation*} 
with the interior knots $t_1 = 10.2898$ and $t_2 = 12.2512$ and the approximation error $\left\|F-s^{\ast}(X)\right\|_2 = 5.7246$; see Fig. \ref{fig8}. This gives us the following values for MIC and MBC
\begin{equation*}
\text{MBC} = 2^{8-t_1} = 0.2045 \hspace{0.3in}\text{and}\hspace{0.3in} \text{MIC} = 2^{8-t_2} = 0.0525.
\end{equation*}
\end{exmp}

% =======================
% Figure 8
% =======================
\begin{figure}[h]
\begin{center}
%\fbox{
\scalebox{0.75}{
\begin{picture}(415,225)
%x-Achse und y-Achse
\put(35,35){\vector(1,0){375}}
\put(40,30){\vector(0,1){193}}
%Einteilung und Beschriftung der x-Achse
\put(37,19){\small $0$}
\put(58.6842,31){\line(0,1){8}}
\put(49.3421,33){\line(0,1){4}}
\put(55.6842,19){\small $1$}
\put(77.3684,31){\line(0,1){8}}
\put(68.0263,33){\line(0,1){4}}
\put(74.3684,19){\small $2$}
\put(96.0526,31){\line(0,1){8}}
\put(86.7105,33){\line(0,1){4}}
\put(93.0526,19){\small $3$}
\put(114.7368,31){\line(0,1){8}}
\put(105.3947,33){\line(0,1){4}}
\put(111.7368,19){\small $4$}
\put(133.4211,31){\line(0,1){8}}
\put(124.0789,33){\line(0,1){4}}
\put(130.4211,19){\small $5$}
\put(152.1053,31){\line(0,1){8}}
\put(142.7632,33){\line(0,1){4}}
\put(149.1053,19){\small $6$}
\put(170.7895,31){\line(0,1){8}}
\put(161.4474,33){\line(0,1){4}}
\put(167.7895,19){\small $7$}
\put(189.4737,31){\line(0,1){8}}
\put(180.1316,33){\line(0,1){4}}
\put(186.4737,19){\small $8$}
\put(208.1579,31){\line(0,1){8}}
\put(198.8158,33){\line(0,1){4}}
\put(205.1579,19){\small $9$}
\put(226.8421,31){\line(0,1){8}}
\put(217.5,33){\line(0,1){4}}
\put(220.8421,19){\small $10$}
\put(245.5263,31){\line(0,1){8}}
\put(236.1842,33){\line(0,1){4}}
\put(239.5263,19){\small $11$}
\put(264.2105,31){\line(0,1){8}}
\put(254.8684,33){\line(0,1){4}}
\put(258.2105,19){\small $12$}
\put(282.8947,31){\line(0,1){8}}
\put(273.5526,33){\line(0,1){4}}
\put(276.8947,19){\small $13$}
\put(301.5789,31){\line(0,1){8}}
\put(292.2368,33){\line(0,1){4}}
\put(295.5789,19){\small $14$}
\put(320.2632,31){\line(0,1){8}}
\put(310.9211,33){\line(0,1){4}}
\put(314.2632,19){\small $15$}
\put(338.9474,31){\line(0,1){8}}
\put(329.6053,33){\line(0,1){4}}
\put(332.9474,19){\small $16$}
\put(357.6316,31){\line(0,1){8}}
\put(348.2895,33){\line(0,1){4}}
\put(351.6316,19){\small $17$}
\put(376.3158,31){\line(0,1){8}}
\put(366.9737,33){\line(0,1){4}}
\put(370.3158,19){\small $18$}
\put(395,31){\line(0,1){8}}
\put(385.6579,33){\line(0,1){4}}
\put(389,19){\small $19$}
\put(405,19){$x$} 
%Einteilung und Beschriftung der y-Achse
\put(27,33){\footnotesize $0$}
\put(36,49){\line(1,0){8}}
\put(38,42){\line(1,0){4}}
\put(27,47){\footnotesize $8$}
\put(36,63){\line(1,0){8}}
\put(38,56){\line(1,0){4}}
\put(23,61){\footnotesize $16$}
\put(36,77){\line(1,0){8}}
\put(38,70){\line(1,0){4}}
\put(23,75){\footnotesize $24$}
\put(36,91){\line(1,0){8}}
\put(38,84){\line(1,0){4}}
\put(23,89){\footnotesize $32$}
\put(36,105){\line(1,0){8}}
\put(38,98){\line(1,0){4}}
\put(23,103){\footnotesize $40$}
\put(36,119){\line(1,0){8}}
\put(38,112){\line(1,0){4}}
\put(23,117){\footnotesize $48$}
\put(36,133){\line(1,0){8}}
\put(38,126){\line(1,0){4}}
\put(23,131){\footnotesize $56$}
\put(36,147){\line(1,0){8}}
\put(38,140){\line(1,0){4}}
\put(23,145){\footnotesize $64$}
\put(36,161){\line(1,0){8}}
\put(38,154){\line(1,0){4}}
\put(23,159){\footnotesize $72$}
\put(36,175){\line(1,0){8}}
\put(38,168){\line(1,0){4}}
\put(23,173){\footnotesize $80$}
\put(36,189){\line(1,0){8}}
\put(38,182){\line(1,0){4}}
\put(23,187){\footnotesize $88$}
\put(36,203){\line(1,0){8}}
\put(38,196){\line(1,0){4}}
\put(23,201){\footnotesize $96$}
\put(36,210){\line(1,0){8}}
\put(19,208){\footnotesize $100$}
%V orgaben (t_i,f_i)
\color{black}
\put(40,41.3478){\circle*{4}}
\put(58.6842,40.9168){\circle*{4}}
\put(77.3684,40.3093){\circle*{4}}
\put(96.0526,39.9725){\circle*{4}}
\put(114.7368,39.8137){\circle*{4}}
\put(133.4211,40.0761){\circle*{4}}
\put(152.1053,40.1467){\circle*{4}}
\put(170.7895,40.2476){\circle*{4}}
\put(189.4737,40.6222){\circle*{4}}
\put(208.1579,41.7167){\circle*{4}}
\put(226.8421,49.4601){\circle*{4}}
\put(245.5263,101.0385){\circle*{4}}
\put(264.2105,182.0256){\circle*{4}}
\put(282.8947,200.885){\circle*{4}}
\put(301.5789,207.8438){\circle*{4}}
\put(320.2632,204.8242){\circle*{4}}
\put(338.9474,206.5756){\circle*{4}}
\put(357.6316,202.2303){\circle*{4}}
\put(376.3158,208.3048){\circle*{4}}
\put(395,210){\circle*{4}}
\dottedline[\circle*{1}](40,39.258)(232.2569,43.522)(268.9045,202.3719)(395,208.5598)
\put(165,4){\small Concentration ($2^{8-x} \cdot \text{mg/l}$)}
\put(5,60){\rotatebox{90}{\small Proportion of viable bacteria (\%)}}
\end{picture}
} % ENDE \scalebox
%} % END \fbox
\end{center}
\caption{Best approximation $s^\ast$ from Example \ref{medBsp1}.}
\label{fig8}
\end{figure}

%%%%%%%%%%%%%%%%%%%%%%%%%%%%%%%%
% Example 4.7
%%%%%%%%%%%%%%%%%%%%%%%%%%%%%%%%
\vspace{0.2cm}
As seen in the above example the two knots  $t_1$ and $t_2$ of a best approximation partition the medical data into three segments. In many cases the values are slightly monotonically increasing in the first and the last segment and ascending rapidly in the middle segment. 

\vspace{0.05in}
It may happen that no measurements are available from the middle segment. The following examples show that not always two (proper interior) knots are needed for a best approximation and the solutions need not to be unique.
\begin{exmp}\label{medBsp2}
For $i=0,1,\ldots,18$ the medical data $\left(x_i,f_i\right)$ are given by $x_i := i$ and the values $f_i$ from the following table:

\vspace{0.1in}
$\begin{array}{r|rrrrrrrrrr}                                         
i & 0 & 1 & 2 & 3 & 4 & 5 & 6 & 7 & 8 & 9 \\
\hline                                          
f_i & 4.8245 & 5.0786 & 5.7781 & 6.105 & 5.9493 & 6.0516 & 5.589 & 5.5087 & 5.2563 & 4.5123 
\end{array}$                                                       
                                                                   
\vspace{0.05in}                                                             
$\begin{array}{r|rrrrrrrrr}                                         
i &  10 & 11 & 12 & 13 & 14 & 15 & 16 & 17 & 18  \\                         
\hline                                                                                 
f_i & 97.8802 & 96.3044 & 95.6139 & 98.974 & 95.9425 & 96.0353 & 97.0482 & 98.5606 & 100
\end{array}$

\vspace{0.1in}
\noindent{}As best approximation from $S^1_2\left[x_0,x_{18}\right]$ Algorithm \ref{eq:basisalgorithmus} computed the first-degree spline
\begin{equation*}                                                
s^{\ast}(x) =                                                    
\left\{                                                          
\begin{array}{lcl}                                               
-0.026416 \, x + 5.58421 & , & x < 9 \\                                                                        
91.01492 \, x - 813.78779 & , & 9\leq x < 10 \\                                                                
0.25296 \, x + 93.83177 & , & x \geq 10                         
\end{array}                                                      
\right.                                                         
\end{equation*} 
with the data knots $t_1 = 9$ and $t_2 = 10$, see Fig. \ref{fig9}. The associated approximation error is $\left\|F-s^{\ast}(X)\right\|_2 = 4.24581$ and we compute $\text{MBC} = 2^{7-t_1} = 0.25$ and $\text{MIC} = 2^{7-t_2} = 0.125$.

\vspace{0.05in}
Algorithm \ref{eq:basisalgorithmus} computed just one best approximation, namely $s^{\ast}$, but there exist infinitely many best approximations to the given data: Each of the functions
\begin{equation*}
s\left(y_1,y_2,x\right) \;:=\; 
\left\{
\begin{array}{c}
s_1\left(x\right) \\
s_2\left(x\right) \\
s_3\left(x\right)
\end{array}
\right. \;:=\;
\left\{                                                                                                  
\begin{array}{lcl}                                                                                       
-0.026416 \, x + 5.58421 & , & x \leq y_1 \\                                                           
\frac{s_3\left(y_2\right)-s_1\left(y_1\right)}{y_2-y_1}\left(x - y_1\right) + s_1\left(y_1\right) & , & y_1 < x < y_2  \\                                              
0.25296 \, x + 93.83177 & , & x \geq y_2 \\                                                          
\end{array}                                                                                              
\right. 
\end{equation*}
is a best approximation from $S^1_2\left[x_0,x_{18}\right]$ to the data $\left(x_0,f_0\right),\ldots,\left(x_{18},f_{18}\right)$ for arbitrary $y_1,y_2$ with $t_1 \leq y_1 < y_2 \leq t_2$. As limit case we get the \textit{discontinuous} functions
\begin{equation*}
s\left(y,x\right) \;:=\; 
\left\{                                                                                                  
\begin{array}{lcl}                                                                                       
-0.026416 \, x + 5.58421 & , & x \leq y \\                                            
0.25296 \, x + 93.83177 & , & x > y                                                          
\end{array}                                                                                              
\right. 
\end{equation*}
with jump discontinuity in $y$ for arbitrary $y \in \left(t_1,t_2\right)$ as best approximations to the given data. 
\end{exmp}

% =====================
% Figure 9
% =====================
\begin{figure}[h]
\begin{center}
%\fbox{
\scalebox{0.75}{
\begin{picture}(415,225)
% x-Achse und y-Achse
\put(35,35){\vector(1,0){375}}
\put(40,30){\vector(0,1){193}}
% Einteilung und Beschriftung der x-Achse
\put(37,19){\small $0$}
\put(59.7222,31){\line(0,1){8}}
\put(49.8611,33){\line(0,1){4}}
\put(56.7222,19){\small $1$}
\put(79.4444,31){\line(0,1){8}}
\put(69.5833,33){\line(0,1){4}}
\put(76.4444,19){\small $2$}
\put(99.1667,31){\line(0,1){8}}
\put(89.3056,33){\line(0,1){4}}
\put(96.1667,19){\small $3$}
\put(118.8889,31){\line(0,1){8}}
\put(109.0278,33){\line(0,1){4}}
\put(115.8889,19){\small $4$}
\put(138.6111,31){\line(0,1){8}}
\put(128.75,33){\line(0,1){4}}
\put(135.6111,19){\small $5$}
\put(158.3333,31){\line(0,1){8}}
\put(148.4722,33){\line(0,1){4}}
\put(155.3333,19){\small $6$}
\put(178.0556,31){\line(0,1){8}}
\put(168.1944,33){\line(0,1){4}}
\put(175.0556,19){\small $7$}
\put(197.7778,31){\line(0,1){8}}
\put(187.9167,33){\line(0,1){4}}
\put(194.7778,19){\small $8$}
\put(217.5,31){\line(0,1){8}}
\put(207.6389,33){\line(0,1){4}}
\put(214.5,19){\small $9$}
\put(237.2222,31){\line(0,1){8}}
\put(227.3611,33){\line(0,1){4}}
\put(231.2222,19){\small $10$}
\put(256.9444,31){\line(0,1){8}}
\put(247.0833,33){\line(0,1){4}}
\put(250.9444,19){\small $11$}
\put(276.6667,31){\line(0,1){8}}
\put(266.8056,33){\line(0,1){4}}
\put(270.6667,19){\small $12$}
\put(296.3889,31){\line(0,1){8}}
\put(286.5278,33){\line(0,1){4}}
\put(290.3889,19){\small $13$}
\put(316.1111,31){\line(0,1){8}}
\put(306.25,33){\line(0,1){4}}
\put(310.1111,19){\small $14$}
\put(335.8333,31){\line(0,1){8}}
\put(325.9722,33){\line(0,1){4}}
\put(329.8333,19){\small $15$}
\put(355.5556,31){\line(0,1){8}}
\put(345.6944,33){\line(0,1){4}}
\put(349.5556,19){\small $16$}
\put(375.2778,31){\line(0,1){8}}
\put(365.4167,33){\line(0,1){4}}
\put(369.2778,19){\small $17$}
\put(395,31){\line(0,1){8}}
\put(385.1389,33){\line(0,1){4}}
\put(389,19){\small $18$}
\put(405,19){$x$}
% Einteilung und Beschriftung der y-Achse
\put(27,33){\footnotesize $0$}
\put(36,49){\line(1,0){8}}
\put(38,42){\line(1,0){4}}
\put(27,47){\footnotesize $8$}
\put(36,63){\line(1,0){8}}
\put(38,56){\line(1,0){4}}
\put(23,61){\footnotesize $16$}
\put(36,77){\line(1,0){8}}
\put(38,70){\line(1,0){4}}
\put(23,75){\footnotesize $24$}
\put(36,91){\line(1,0){8}}
\put(38,84){\line(1,0){4}}
\put(23,89){\footnotesize $32$}
\put(36,105){\line(1,0){8}}
\put(38,98){\line(1,0){4}}
\put(23,103){\footnotesize $40$}
\put(36,119){\line(1,0){8}}
\put(38,112){\line(1,0){4}}
\put(23,117){\footnotesize $48$}
\put(36,133){\line(1,0){8}}
\put(38,126){\line(1,0){4}}
\put(23,131){\footnotesize $56$}
\put(36,147){\line(1,0){8}}
\put(38,140){\line(1,0){4}}
\put(23,145){\footnotesize $64$}
\put(36,161){\line(1,0){8}}
\put(38,154){\line(1,0){4}}
\put(23,159){\footnotesize $72$}
\put(36,175){\line(1,0){8}}
\put(38,168){\line(1,0){4}}
\put(23,173){\footnotesize $80$}
\put(36,189){\line(1,0){8}}
\put(38,182){\line(1,0){4}}
\put(23,187){\footnotesize $88$}
\put(36,203){\line(1,0){8}}
\put(38,196){\line(1,0){4}}
\put(23,201){\footnotesize $96$}
\put(36,210){\line(1,0){8}}
\put(19,208){\footnotesize $100$}
% Vorgaben (t_i,f_i)
\color{black}
\put(40,43.4429){\circle*{4}}
\put(59.7222,43.8875){\circle*{4}}
\put(79.4444,45.1117){\circle*{4}}
\put(99.1667,45.6838){\circle*{4}}
\put(118.8889,45.4113){\circle*{4}}
\put(138.6111,45.5903){\circle*{4}}
\put(158.3333,44.7807){\circle*{4}}
\put(178.0556,44.6402){\circle*{4}}
\put(197.7778,44.1985){\circle*{4}}
\put(217.5,42.8965){\circle*{4}}
\put(237.2222,206.2903){\circle*{4}}
\put(256.9444,203.5327){\circle*{4}}
\put(276.6667,202.3243){\circle*{4}}
\put(296.3889,208.2045){\circle*{4}}
\put(316.1111,202.8994){\circle*{4}}
\put(335.8333,203.0618){\circle*{4}}
\put(355.5556,204.8344){\circle*{4}}
\put(375.2778,207.481){\circle*{4}}
\put(395,210){\circle*{4}}
\dottedline[\circle*{1}](40,44.7724)(217.5,44.3563)(237.2222,203.6324)(395,207.1739)
\thicklines
\dottedline{2}(40,44.7724)(225.3889,44.3378)(229.3333,203.4553)(395,207.1739)
\thinlines
\color{black}
\put(165,4){\small Concentration ($2^{7-x} \cdot \text{mg/l}$)}
\put(5,60){\rotatebox{90}{\small Proportion of viable bacteria (\%)}}
% Legende
\dottedline[\circle*{1}](260,150)(290,150)
\thicklines
\dottedline{2}(260,135)(290,135)   
\thinlines   
\put(298,147){$s^{\ast}$}
\put(298,132){$s(9.4,9.6,x)$} 
\end{picture}
} % ENDE \scalebox
%} % END \fbox
\end{center}
\caption{Best approximations $s^{\ast}$ and $s(9.4,9.6,x)$ from Example \ref{medBsp2}.}
\label{fig9}
\end{figure}
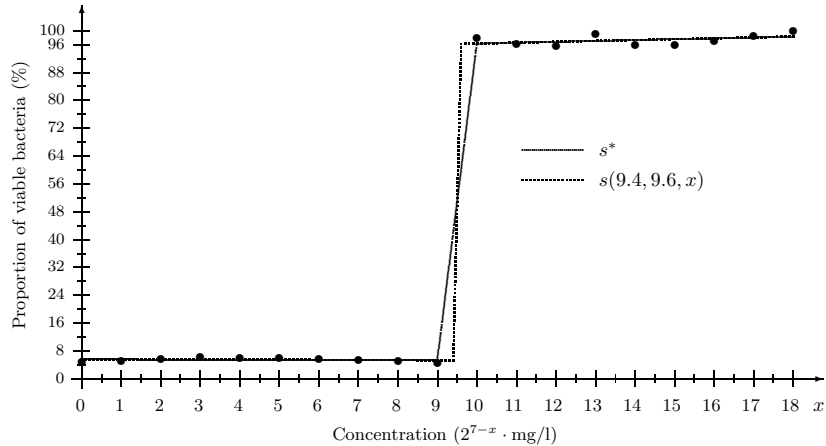

%%%%%%%%%%%%%%%%%%%%%%%%%%%%%%%%
% Example 4.8
%%%%%%%%%%%%%%%%%%%%%%%%%%%%%%%%
\vspace{0.2cm}
\begin{exmp}\label{medBsp3}
For $i=0,1,\ldots,18$ we look at the data $(x_i,f_i)$ from Example \ref{medBsp2} except that now $f_9 := 7.5123$ and $f_{10} := 97.8802$. As best approximation from $S^1_2\left[x_0,x_{18}\right]$ to the data pairs $\left(x_0,f_0\right),\ldots, \left(x_{18},f_{18}\right)$ thus defined we get from Algorithm \ref{eq:basisalgorithmus} the spline
\begin{equation*}   
s^{\ast}\left(x\right) \;=\;                                              
\left\{                                                                                                   
\begin{array}{lcl}                                                                                        
0.043098 \, x + 5.39884 & , & x < 8.98057 \\                                                                   
88.84909 \, x - 792.12948 & , & 8.98057 \leq x < 10 \\                                                          
0.25296 \, x + 93.83177 & , & x \geq 10                                                           
\end{array}                                                                                               
\right.                                                          
\end{equation*}
with the interior knot $t_1 = 8.98057$, the data knot $t_2 = 10$ and the minimal approximation error $\left\|F_2-s^{\ast}_{2}(X)\right\|_2 = 4.11872$, see Fig. \ref{fig10}. Here again, arbitrarily many solutions  $s\left(y_2,t\right) \in S^1_2\left[t_0,t_{18}\right]$can be derived from $s^{\ast}$ for all of which 
\begin{equation*}
s\left(y_2,x_9\right) = s^{\ast}\left(x_9\right) = f_9 = 7.5123
\end{equation*}
must hold. This implies that for the knots $y_1$ and $y_2$ the inequality $t_1 \leq y_1 < x_9 < y_2 \leq t_2$ must be valid. Because of $s\left(y_2,x_9\right) = f_9$ only one of the two knots can be freely chosen, the other one depends on that choice. If e.g. $y_2$ with $x_9 < y_2 < t_2$ ist chosen, then the splines
\begin{equation*} 
s\left(y_2,t\right) \;:=\; 
\left\{
\begin{array}{c}
s_2\left(t\right) \\
s_2\left(t\right) \\
s_3\left(t\right)
\end{array}
\right. \;:=\;                                          
\left\{                                                                                                   
\begin{array}{lcl}                                                                                        
0.043098 \, x + 5.39884 & , & x \leq y_1 \\                                                                  
\frac{f_9 - s_3(y_2)}{x_9 - y_1}\left(x - x_9\right) + f_9 & , & y_1 < x < y_2 \\
0.25296 \, x + 93.83177 & , & x \geq y_2 \\                                                           
\end{array}                                                                                               
\right.                                                          
\end{equation*}
also are best approximations, where $y_1$ ist determined as solution of $s_1(y_2) = s_2(y_2)$. 
\end{exmp}

% =====================
% Figure 10
% =====================
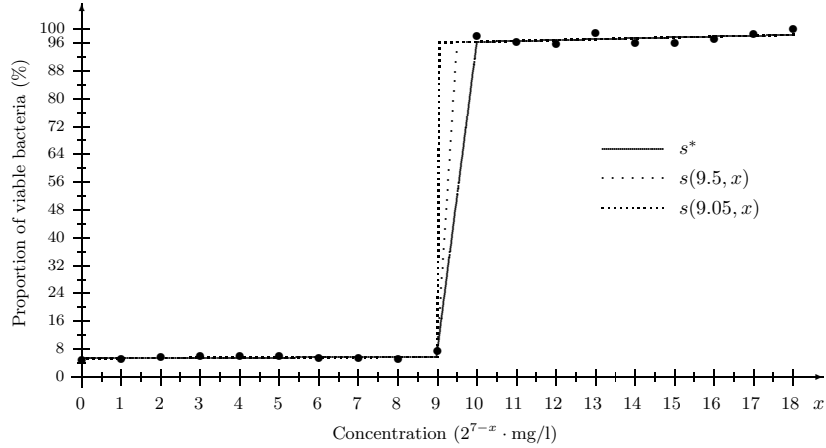
\begin{figure}[h]
\begin{center}
%\fbox{
\scalebox{0.75}{
\begin{picture}(415,225)
% x-Achse und y-Achse
\put(35,35){\vector(1,0){375}}
\put(40,30){\vector(0,1){193}}
% Einteilung und Beschriftung der x-Achse
\put(37,19){\small $0$}
\put(59.7222,31){\line(0,1){8}}
\put(49.8611,33){\line(0,1){4}}
\put(56.7222,19){\small $1$}
\put(79.4444,31){\line(0,1){8}}
\put(69.5833,33){\line(0,1){4}}
\put(76.4444,19){\small $2$}
\put(99.1667,31){\line(0,1){8}}
\put(89.3056,33){\line(0,1){4}}
\put(96.1667,19){\small $3$}
\put(118.8889,31){\line(0,1){8}}
\put(109.0278,33){\line(0,1){4}}
\put(115.8889,19){\small $4$}
\put(138.6111,31){\line(0,1){8}}
\put(128.75,33){\line(0,1){4}}
\put(135.6111,19){\small $5$}
\put(158.3333,31){\line(0,1){8}}
\put(148.4722,33){\line(0,1){4}}
\put(155.3333,19){\small $6$}
\put(178.0556,31){\line(0,1){8}}
\put(168.1944,33){\line(0,1){4}}
\put(175.0556,19){\small $7$}
\put(197.7778,31){\line(0,1){8}}
\put(187.9167,33){\line(0,1){4}}
\put(194.7778,19){\small $8$}
\put(217.5,31){\line(0,1){8}}
\put(207.6389,33){\line(0,1){4}}
\put(214.5,19){\small $9$}
\put(237.2222,31){\line(0,1){8}}
\put(227.3611,33){\line(0,1){4}}
\put(231.2222,19){\small $10$}
\put(256.9444,31){\line(0,1){8}}
\put(247.0833,33){\line(0,1){4}}
\put(250.9444,19){\small $11$}
\put(276.6667,31){\line(0,1){8}}
\put(266.8056,33){\line(0,1){4}}
\put(270.6667,19){\small $12$}
\put(296.3889,31){\line(0,1){8}}
\put(286.5278,33){\line(0,1){4}}
\put(290.3889,19){\small $13$}
\put(316.1111,31){\line(0,1){8}}
\put(306.25,33){\line(0,1){4}}
\put(310.1111,19){\small $14$}
\put(335.8333,31){\line(0,1){8}}
\put(325.9722,33){\line(0,1){4}}
\put(329.8333,19){\small $15$}
\put(355.5556,31){\line(0,1){8}}
\put(345.6944,33){\line(0,1){4}}
\put(349.5556,19){\small $16$}
\put(375.2778,31){\line(0,1){8}}
\put(365.4167,33){\line(0,1){4}}
\put(369.2778,19){\small $17$}
\put(395,31){\line(0,1){8}}
\put(385.1389,33){\line(0,1){4}}
\put(389,19){\small $18$}
\put(405,19){$x$}
% Einteilung und Beschriftung der y-Achse
\put(27,33){\footnotesize $0$}
\put(36,49){\line(1,0){8}}
\put(38,42){\line(1,0){4}}
\put(27,47){\footnotesize $8$}
\put(36,63){\line(1,0){8}}
\put(38,56){\line(1,0){4}}
\put(23,61){\footnotesize $16$}
\put(36,77){\line(1,0){8}}
\put(38,70){\line(1,0){4}}
\put(23,75){\footnotesize $24$}
\put(36,91){\line(1,0){8}}
\put(38,84){\line(1,0){4}}
\put(23,89){\footnotesize $32$}
\put(36,105){\line(1,0){8}}
\put(38,98){\line(1,0){4}}
\put(23,103){\footnotesize $40$}
\put(36,119){\line(1,0){8}}
\put(38,112){\line(1,0){4}}
\put(23,117){\footnotesize $48$}
\put(36,133){\line(1,0){8}}
\put(38,126){\line(1,0){4}}
\put(23,131){\footnotesize $56$}
\put(36,147){\line(1,0){8}}
\put(38,140){\line(1,0){4}}
\put(23,145){\footnotesize $64$}
\put(36,161){\line(1,0){8}}
\put(38,154){\line(1,0){4}}
\put(23,159){\footnotesize $72$}
\put(36,175){\line(1,0){8}}
\put(38,168){\line(1,0){4}}
\put(23,173){\footnotesize $80$}
\put(36,189){\line(1,0){8}}
\put(38,182){\line(1,0){4}}
\put(23,187){\footnotesize $88$}
\put(36,203){\line(1,0){8}}
\put(38,196){\line(1,0){4}}
\put(23,201){\footnotesize $96$}
\put(36,210){\line(1,0){8}}
\put(19,208){\footnotesize $100$}
% Vorgaben (t_i,f_i)
\color{black}
\put(40,43.4429){\circle*{4}}
\put(59.7222,43.8875){\circle*{4}}
\put(79.4444,45.1117){\circle*{4}}
\put(99.1667,45.6838){\circle*{4}}
\put(118.8889,45.4113){\circle*{4}}
\put(138.6111,45.5903){\circle*{4}}
\put(158.3333,44.7807){\circle*{4}}
\put(178.0556,44.6402){\circle*{4}}
\put(197.7778,44.1985){\circle*{4}}
\put(217.5,48.1465){\circle*{4}}
\put(237.2222,206.2903){\circle*{4}}
\put(256.9444,203.5327){\circle*{4}}
\put(276.6667,202.3243){\circle*{4}}
\put(296.3889,208.2045){\circle*{4}}
\put(316.1111,202.8994){\circle*{4}}
\put(335.8333,203.0618){\circle*{4}}
\put(355.5556,204.8344){\circle*{4}}
\put(375.2778,207.481){\circle*{4}}
\put(395,210){\circle*{4}}
\dottedline[\circle*{1}](40,44.448)(217.1168,45.1253)(237.2222,203.6324)(395,207.1739)
\dottedline[\circle*{1}]{5}(40,44.448)(217.2221,45.1257)(227.3611,203.4111)(395,207.1739)
\thicklines
\dottedline{3}(40,44.448)(217.4722,45.1267)(218.4861,203.2119)(395,207.1739)
\thinlines
\put(165,4){\small Concentration ($2^{7-x} \cdot \text{mg/l}$)}                               
\put(5,60){\rotatebox{90}{\small Proportion of viable bacteria (\%)}}      
% Legende
\dottedline[\circle*{1}](300,150)(330,150)
\dottedline[\circle*{1}]{5}(300,135)(330,135) 
\thicklines
\dottedline{3}(300,120)(330,120)   
\thinlines   
\put(338,147){$s^{\ast}$}
\put(338,132){$s(9.5,x)$}
\put(338,117){$s(9.05,x)$}
\end{picture}
} % ENDE \scalebox
%} % END \fbox
\end{center}
\caption{The best approximations $s^{\ast}$, $s(9.05,x)$ and $s_2(9.5,x)$ from Example \ref{medBsp3}.}
\label{fig10}
\end{figure}

%%%%%%%%%%%%%%%%%%%%%%%%%%%%
% Example 4.9
%%%%%%%%%%%%%%%%%%%%%%%%%%%%
\vspace{0.2cm}
Measurement data not always meet the pattern of Example \ref{medBsp1}. Whether the interpretation of computed knots als MIC and MBC ist still a good idea must be decided from the medical perspective. In the next example we focus on such a situation.
\begin{exmp}\label{medBsp4}
For $i=0,1,\ldots,19$ the medical data pairs $(x_i,f_i)$ are listed in the following table:

\vspace{0.1in}
$\begin{array}{r|rrrrrrrrrr}                                         
i & 0 & 1 & 2 & 3 & 4 & 5 & 6 & 7 & 8 & 9 \\                                
\hline                                                                                        
f_i & 3.0354 & 3.1654 & 3.0862 & 3.0564 & 2.9804 & 2.9632 & 2.8198 & 3.1239 & 3.0576 & 2.9828
\end{array}$                                                       
                                                                   
\vspace{0.1in}                                                             
$\begin{array}{r|rrrrrrrrrr}                                         
i &  10 & 11 & 12 & 13 & 14 & 15 & 16 & 17 & 18 & 19 \\                         
\hline                                                                                 
f_i  & 3.1498 & 3.5877 & 4.0296 & 6.6481 & 9.829 & 12.1237 & 30.1584 & 70.2245 & 89.7225 & 100
\end{array}$

\vspace{0.1in}
\noindent{}As best approximation from $S^1_2\left[t_0,t_{19}\right]$ Algorithm \ref{eq:basisalgorithmus} computed the spline
\begin{equation*}                                                
s^{\ast}(x) =                                                    
\left\{                                                                                               
\begin{array}{lcl}                                                                                    
0.40661 \, t + 1.30283 & , & t < 15.43646 \\                                                          
40.0661 \, t - 610.8992 & , & 15.43646\leq t < 17.30953 \\                                            
10.2775 \, t - 95.2725 & , & t \geq 17.30953                                                     
\end{array}                                                                                           
\right.                                                         
\end{equation*}                                                  
witch the two (interior) knots $t_1 = 15.43646$ and $t_2 = 17.30953$ and the approximation error $\left\|F-s^{\ast}(X)\right\|_2 = 7.69589$, see Fig. \ref{fig11}. This leads to $\text{MBC} = 2^{8-t_1} = 0.00577$ and $\text{MIC} = 2^{8-t_2} = 0.00158$.
\end{exmp}

% ===========================
% Figure 11
% ===========================
\begin{figure}[h]
\begin{center}
%\fbox{
\scalebox{0.75}{
\begin{picture}(415,225)
% x-Achse und y-Achse
\put(35,35){\vector(1,0){375}}
\put(40,30){\vector(0,1){193}}
% Einteilung und Beschriftung der x-Achse
\put(37,19){\small $0$}
\put(58.6842,31){\line(0,1){8}}
\put(49.3421,33){\line(0,1){4}}
\put(55.6842,19){\small $1$}
\put(77.3684,31){\line(0,1){8}}
\put(68.0263,33){\line(0,1){4}}
\put(74.3684,19){\small $2$}
\put(96.0526,31){\line(0,1){8}}
\put(86.7105,33){\line(0,1){4}}
\put(93.0526,19){\small $3$}
\put(114.7368,31){\line(0,1){8}}
\put(105.3947,33){\line(0,1){4}}
\put(111.7368,19){\small $4$}
\put(133.4211,31){\line(0,1){8}}
\put(124.0789,33){\line(0,1){4}}
\put(130.4211,19){\small $5$}
\put(152.1053,31){\line(0,1){8}}
\put(142.7632,33){\line(0,1){4}}
\put(149.1053,19){\small $6$}
\put(170.7895,31){\line(0,1){8}}
\put(161.4474,33){\line(0,1){4}}
\put(167.7895,19){\small $7$}
\put(189.4737,31){\line(0,1){8}}
\put(180.1316,33){\line(0,1){4}}
\put(186.4737,19){\small $8$}
\put(208.1579,31){\line(0,1){8}}
\put(198.8158,33){\line(0,1){4}}
\put(205.1579,19){\small $9$}
\put(226.8421,31){\line(0,1){8}}
\put(217.5,33){\line(0,1){4}}
\put(220.8421,19){\small $10$}
\put(245.5263,31){\line(0,1){8}}
\put(236.1842,33){\line(0,1){4}}
\put(239.5263,19){\small $11$}
\put(264.2105,31){\line(0,1){8}}
\put(254.8684,33){\line(0,1){4}}
\put(258.2105,19){\small $12$}
\put(282.8947,31){\line(0,1){8}}
\put(273.5526,33){\line(0,1){4}}
\put(276.8947,19){\small $13$}
\put(301.5789,31){\line(0,1){8}}
\put(292.2368,33){\line(0,1){4}}
\put(295.5789,19){\small $14$}
\put(320.2632,31){\line(0,1){8}}
\put(310.9211,33){\line(0,1){4}}
\put(314.2632,19){\small $15$}
\put(338.9474,31){\line(0,1){8}}
\put(329.6053,33){\line(0,1){4}}
\put(332.9474,19){\small $16$}
\put(357.6316,31){\line(0,1){8}}
\put(348.2895,33){\line(0,1){4}}
\put(351.6316,19){\small $17$}
\put(376.3158,31){\line(0,1){8}}
\put(366.9737,33){\line(0,1){4}}
\put(370.3158,19){\small $18$}
\put(395,31){\line(0,1){8}}
\put(385.6579,33){\line(0,1){4}}
\put(389,19){\small $19$}
\put(405,19){$x$}
% Einteilung und Beschriftung der y-Achse
\put(27,33){\footnotesize $0$}
\put(36,49){\line(1,0){8}}
\put(38,42){\line(1,0){4}}
\put(27,47){\footnotesize $8$}
\put(36,63){\line(1,0){8}}
\put(38,56){\line(1,0){4}}
\put(23,61){\footnotesize $16$}
\put(36,77){\line(1,0){8}}
\put(38,70){\line(1,0){4}}
\put(23,75){\footnotesize $24$}
\put(36,91){\line(1,0){8}}
\put(38,84){\line(1,0){4}}
\put(23,89){\footnotesize $32$}
\put(36,105){\line(1,0){8}}
\put(38,98){\line(1,0){4}}
\put(23,103){\footnotesize $40$}
\put(36,119){\line(1,0){8}}
\put(38,112){\line(1,0){4}}
\put(23,117){\footnotesize $48$}
\put(36,133){\line(1,0){8}}
\put(38,126){\line(1,0){4}}
\put(23,131){\footnotesize $56$}
\put(36,147){\line(1,0){8}}
\put(38,140){\line(1,0){4}}
\put(23,145){\footnotesize $64$}
\put(36,161){\line(1,0){8}}
\put(38,154){\line(1,0){4}}
\put(23,159){\footnotesize $72$}
\put(36,175){\line(1,0){8}}
\put(38,168){\line(1,0){4}}
\put(23,173){\footnotesize $80$}
\put(36,189){\line(1,0){8}}
\put(38,182){\line(1,0){4}}
\put(23,187){\footnotesize $88$}
\put(36,203){\line(1,0){8}}
\put(38,196){\line(1,0){4}}
\put(23,201){\footnotesize $96$}
\put(36,210){\line(1,0){8}}
\put(19,208){\footnotesize $100$}
% Vorgaben (t_i,f_i)
\color{black}
\put(40,40.312){\circle*{4}}
\put(58.6842,40.5395){\circle*{4}}
\put(77.3684,40.4008){\circle*{4}}
\put(96.0526,40.3487){\circle*{4}}
\put(114.7368,40.2157){\circle*{4}}
\put(133.4211,40.1856){\circle*{4}}
\put(152.1053,39.9346){\circle*{4}}
\put(170.7895,40.4668){\circle*{4}}
\put(189.4737,40.3508){\circle*{4}}
\put(208.1579,40.2199){\circle*{4}}
\put(226.8421,40.5121){\circle*{4}}
\put(245.5263,41.2785){\circle*{4}}
\put(264.2105,42.0518){\circle*{4}}
\put(282.8947,46.6342){\circle*{4}}
\put(301.5789,52.2007){\circle*{4}}
\put(320.2632,56.2165){\circle*{4}}
\put(338.9474,87.7772){\circle*{4}}
\put(357.6316,157.8929){\circle*{4}}
\put(376.3158,192.0144){\circle*{4}}
\put(395,210){\circle*{4}}
\dottedline[\circle*{1}](40,37.28)(328.4181,48.2642)(363.4149,179.5959)(395,210)
\put(165,4){\small Concentration ($2^{8-x} \cdot \text{mg/l}$)}
\put(5,60){\rotatebox{90}{\small Proportion of viable bacteria (\%)}}
\end{picture}
} % ENDE \scalebox
%} % END \fbox
\end{center}
\caption{The best approximation $s^{\ast}$ from Example \ref{medBsp4}.}
\label{fig11}
\end{figure}
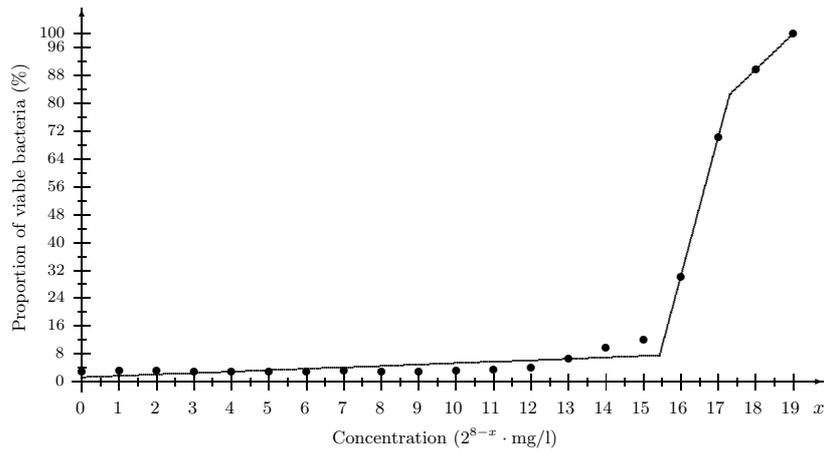

%%%%%%%%%%%%%%%%%%%%%%%%%%%%%%%%
%%%%%%%%%%%%%%%%%%%%%%%%%%%%%%%%
% References
%%%%%%%%%%%%%%%%%%%%%%%%%%%%%%%%
%%%%%%%%%%%%%%%%%%%%%%%%%%%%%%%%
\vspace{0.2cm}

%%%%%%%%%%%%%%%%%%%%%%%%%%%%%%
%%%%%%%%%%%%%%%%%%%%%%%%%%%%%%
% Appendix
%%%%%%%%%%%%%%%%%%%%%%%%%%%%%%
%%%%%%%%%%%%%%%%%%%%%%%%%%%%%%

\newpage
\section*{Appendix A: Theorem 12 from \cite{cromme03}}
\addcontentsline{toc}{section}{Anhang A: Satz 12 aus \cite{cromme03}}
\setcounter{equation}{11}
\renewcommand{\theequation}{\arabic{equation}}
\begin{theorem}\label{satzEigenschaftenBesteApprox} 
Let $\mu \geq k+1 \geq 2$ and $1 \leq p \leq \infty$. For given data $(x_0,f_0),\ldots$, $(x_{\mu+1},f_{\mu+1})$ with $x_0 < x_1 < \ldots < x_{\mu} < x_{\mu+1}$ and $f_0,\ldots,f_{\mu+1} \in \mathbb{R}$ exists a best approximation $s^{\ast} \in S^1_k\left[x_0,x_{\mu+1}\right]$ 
\begin{equation*}
\big\|f-s^\ast\big\|_{p,X} \;\;=\;\; \inf_{s\in S^1_k[x_0,x_{\mu+1}]}\big\|f-s\big\|_{p,X}\;,
\end{equation*}
with the following additional features where $t_1,\ldots,t_k$ with $t_0 := x_0 < t_1 < \ldots < t_k < t_{k+1} := x_{\mu+1}$ denote the knots of $s^\ast$:
\begin{itemize}
  \item[(a)] There are no knots in the boundary regions. More precisely: 
\begin{equation*}
x_0 < x_1 \leq t_1 < \ldots < t_k \leq x_{\mu} < x_{\mu+1} \quad.
\end{equation*}
	
	\item[(b)] Data abscissae neighboring to interior knots are not knots.
	
	\item[(c)] Between two (not necessarily neighboring) interior knots of $s^\ast$ lie at least two data abscissae which are not knots of $s^\ast$.
	
	\item[(d)] On or between neighboring knots lie at least two data abscissae:
\begin{equation*}
\forall\, j,\; 0 \leq j \leq r: \;\; \exists\, i,\; 0\leq i \leq \mu: \;\; t_j \leq x_i < x_{i+1} \leq t_{j+1} \quad.
\end{equation*}

	\item[(e)] If an interior knot is situated between two neighboring data abscissae, then no additional knot lies on or between these data abscissae. That is, the proposition "$t_j$ is a knot of $s^\ast$ with $x_q < t_j < x_{q+1}$" implies $t_{j-1} < x_q$ and $t_{j+1} > x_{q+1}$.
	
	\item[(f)] Let $t_j$ be an interior knot. Then in each of the intervals $(-\infty,t_j)$ and $(t_j,\infty)$ there is a data abscissa from $\big\{x_1,\ldots,x_\mu\big\}$ which is not a knot.
	
	\item[(g)] For $p < \infty$ we have: Between an interior and a neighboring data knot of $s^\ast$ lies either exactly one data abscissa $x_q$ which is then reproduced ($t_j < x_q < t_{j+1}$, $s^\ast(x_q) = f_q$) or there exist at least two data abscissae $x_q, x_{q+1}$ between the knots ($t_j < x_q < x_{q+1} < t_{j+1}$).
	
	\item[(h)] All interior knots are proper knots, that is, the first derivative is discontinuous in all interior knots.
\end{itemize}
\end{theorem}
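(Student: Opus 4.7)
The plan is to split the argument into two parts: first, a compactness argument establishing existence of a best approximation in $S^1_k[x_0,x_{\mu+1}]$; and second, a sequence of local surgeries that transform any best approximation into one simultaneously satisfying properties (a)--(h). The family $S^1_k[x_0,x_{\mu+1}]$ includes splines with ``at most'' $k$ free knots, so coalescing knots does not lead the limit spline out of the class.

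For existence I would take a minimizing sequence $s_n\in S^1_k[x_0,x_{\mu+1}]$ with knot vectors $(t_1^n,\ldots,t_k^n)\in[x_0,x_{\mu+1}]^k$. Compactness of this box yields (after a subsequence) a limit $(t_1^{\ast},\ldots,t_k^{\ast})$, possibly with coalesced components, which still codes at most $k$ simple knots. Boundedness of $\|f-s_n\|_{p,X}$ gives boundedness of $s_n(X)$; on each piece between consecutive limit knots the linear polynomial is determined (for large $n$) by the bounded values at the neighboring data abscissae, so after a further subsequence the coefficients converge to a limit spline $s^{\ast}\in S^1_k[x_0,x_{\mu+1}]$. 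Continuity of $\|\cdot\|_{p,X}$ then gives $\|f-s^{\ast}\|_{p,X}=\inf_{s\in S^1_k[x_0,x_{\mu+1}]}\|f-s\|_{p,X}$.

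For the additional features I would, among all best approximations, select a distinguished one by a lexicographic criterion: first minimize the number of \emph{proper} knots, then among the remaining candidates maximize the number of knots that coincide with data abscissae, and finally apply a further tie-breaker (e.g.\ choose the knot vector that is extremal with respect to some fixed ordering). Each property then reduces to a local perturbation that decreases the criterion strictly while preserving optimality. Specifically: an improper interior knot can be deleted or moved onto a data abscissa, yielding (h); a knot lying in $(x_0,x_1)$ or $(x_\mu,x_{\mu+1})$ only sees a single datum and can be shifted to $x_1$ (resp.\ $x_\mu$) or absorbed, yielding (a); if two successive knots fail (d) the two adjacent linear pieces span too few data to be rigid, so one knot can be slid until a new data abscissa separates them or the knots merge, and similar arguments give (b), (c), (e), (f); for (g), the normal equations for least squares (and, more generally, convexity of the $\ell^p$ error for $p<\infty$) force reproduction whenever a single datum is isolated between an interior and a data knot.

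The main obstacle is the coordination of these local surgeries: a modification aimed at (a) might create a configuration violating, say, (c), and vice versa. The remedy is to order the reductions so that each one strictly improves the lexicographic criterion introduced above, which makes the process terminate after finitely many steps. The second subtle point is property (h): one must argue that an improper knot can be \emph{removed} in such a way that the resulting spline still lies in $S^1_k$ (this is automatic since $S^1_{k-1}\subset S^1_k$) and that the freed knot can, if necessary, be placed at a data abscissa where it is harmless, so as not to disturb the minimum of the criterion already attained by the other properties.
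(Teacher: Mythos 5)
The paper does not actually prove this theorem: it is Theorem 12 of the companion paper \cite{cromme03}, reproduced verbatim in Appendix A and used as a black box (via Remark \ref{eq:bemerkungMindesanforderungen}, Lemma \ref{eq:LemmaAequivalenz} and Theorem \ref{eq:konvergenzsatz}), so there is no in-paper proof to compare against. Judged on its own merits, your proposal has two genuine gaps. First, the compactness argument for existence breaks down at exactly the delicate point: when two knots $t_j^n, t_{j+1}^n$ coalesce in the limit, the slope of the intermediate linear piece can blow up, and the limit of the values $s_n(X)$ need not equal $s(X)$ for the ``obvious'' limit spline --- the limiting object on $[x_0,x_{\mu+1}]$ is in general a \emph{discontinuous} piecewise linear function. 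The paper itself exhibits this phenomenon in Example \ref{medBsp2}, where discontinuous limits of splines in $S^1_2$ attain the optimal error. Your step ``on each piece between consecutive limit knots the linear polynomial is determined by the bounded values at the neighboring data abscissae'' silently assumes each such piece contains at least two data abscissae, which fails precisely in these degenerate configurations; you must additionally show that the limiting value vector on $X$ can be re-realized by a genuine element of $S^1_k$ (e.g.\ by reinserting a steep linear piece using two of the freed-up knots inside a single interval $(x_i,x_{i+1})$), and that enough knots remain available to do so.

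Second, the part of the theorem carrying all of the content --- features (a) through (h) --- is only gestured at. The lexicographic selection is a sensible organizing device, but you never verify for any single surgery that it (i) preserves the optimal error, (ii) strictly improves the criterion, and (iii) does not spoil a previously established feature in a way the criterion fails to detect; asserting that such an ordering exists is not the same as exhibiting it. Concretely: shifting a knot from $(x_0,x_1)$ to $x_1$ for (a) alters the spline on the piece containing the datum $x_0$, so optimality preservation needs the explicit observation that the leftmost linear piece can be prolonged without changing any residual; converting an improper interior knot into a data knot for (h) must be checked not to increase the count of proper knots that your first criterion minimizes; and for (g) the claim that the normal equations ``force reproduction'' of an isolated datum is precisely the statement to be proved and requires a perturbation argument using the free slope of the middle piece together with strict convexity of $t \mapsto |t|^p$ for $1<p<\infty$ (and a separate argument for $p=1$). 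As written, the proposal is a plausible programme rather than a proof of (a)--(h).
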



\begin{thebibliography}{1}

\bibitem{baerSchumannKrebsCromme} B{\"a}r, W., B{\"a}de-Schumann, U., Krebs, A., Cromme, L. J.: Rapid method for detection of minimal bactericidal concentration of antibiotics. Journal of Microbiological Methods, 77, 85-89, (2009)

\bibitem{cromme01} Cromme, L. J.: Manifolds with cusps. Mathem. Nachrichten, 142, 219-234 (1989)

\bibitem{cromme02} Cromme, L. J.: A unified approach to differential characterizations of local best approximations for exponential sums und splines. J. of Approximation Theory, 36(4), 294-303 (1982)

\bibitem{cromme03} Cromme, L. J., Kunath, J.: Discrete approximation by first-degree splines with free knots. Preprint at https://arxiv.org (2017).

\bibitem{deBoor} de Boor, C.: A practical guide to splines, Springer, New York (1978)

\bibitem{jupp} Jupp, D. L. B.: Approximation to data by splines with free knots. SIAM J. Numer. Anal., 15(2), 328-343 (1978)

\end{thebibliography}
\end{document}